\title{Epimorphisms between $2$--bridge link groups}
\author{Tomotada Ohtsuki}
\address{Research Institute for Mathematical Sciences\\
Kyoto University\\\newline
Sakyo-ku\\Kyoto\\ 606-8502\\
Japan}
\email{tomotada@kurims.kyoto-u.ac.jp}
\urladdr{http://www.kurims.kyoto-u.ac.jp/~tomotada/}
\author{Robert Riley}
\address{Passed away on March 4, 2000}
\author{Makoto Sakuma}
\address{ Department of Mathematics\\
 Graduate School of Science\\
 Hiroshima University\\\newline
 Higashi-Hiroshima\\ 739-8526\\
 Japan}
\email{sakuma@math.sci.hiroshima-u.ac.jp}
\urladdr{http://www.math.sci.hiroshima-u.ac.jp/~sakuma/}
\dedicatory{Dedicated to the memory of Professor Heiner Zieschang}
\def\cnewtheorem#1[#2]#3{\newtheorem{#1}{#3}[section]
\expandafter\let\csname c@#1\endcsname\c@Theorem}
\def\hgr{\smash{\hat\Gamma_r}}
\newtheorem{Theorem}{Theorem}[section]
\theoremstyle{definition}
\newcommand{\interiorop}{\operatorname{int}}
\newcommand{\lk}{\operatorname{lk}}
\newcommand{\ZZ}{\mathbb{Z}}
\newcommand{\QQ}{\mathbb{Q}}
\newcommand{\RR}{\mathbb{R}}
\newcommand{\CC}{\mathbb{C}}
\newcommand{\HH}{\mathbb{H}}
\newcommand{\QQQ}{\hat{\mathbb{Q}}}
\newcommand{\Conway}{\boldsymbol{S}^{\!2}}
\newcommand{\Conways}
{(\boldsymbol{S}^{\!2},\boldsymbol{P})}
\newcommand{\PP}{\boldsymbol{P}}
\newcommand{\PConway}{\boldsymbol{S}}
\newcommand{\rtangle}[1]{(B^3,t({#1}))}
\newcommand{\RX}{\mbox{$\mathcal{X}$}}
\newcommand{\DD}{\mathcal{D}}
\newcommand{\RGPP}[1]{\smash{\hat\Gamma_{#1}}}
\newcommand{\RGP}[1]{\Gamma_{#1}}
\newcommand{\cfr}{\mbox{\boldmath$a$}}
\newcommand{\svert}{\,|\,}
\newcommand{\llangle}{\langle\langle}
\newcommand{\rrangle}{\rangle\rangle}
\newcommand{\fns}{\small}
\newcommand{\lra}{\longrightarrow}
\newcommand{\ol}{\wwbar}
\newcommand{\pc}[2]{\mbox{$\begin{array}{c}
    \includegraphics[scale=#2]{\figdir/#1}
    \end{array}$}}
\begin{document}

\begin{asciiabstract}    
We give a systematic construction of epimorphisms
between 2-bridge link groups.
Moreover, we show that 2-bridge links having
such an epimorphism between their link groups
are related by a map between the ambient spaces
which only have a certain specific kind of singularity.
We show applications of these epimorphisms
to the character varieties for 2-bridge links
and pi_1-dominating maps among 3-manifolds.
\end{asciiabstract}

\begin{htmlabstract}
We give a systematic construction of epimorphisms
between 2&ndash;bridge link groups.
Moreover, we show that 2&ndash;bridge links having
such an epimorphism between their link groups
are related by a map between the ambient spaces
which only have a certain specific kind of singularity.
We show applications of these epimorphisms
to the character varieties for 2&ndash;bridge links
and &pi;<sub>1</sub>&ndash;dominating maps among 3&ndash;manifolds.
\end{htmlabstract}

\begin{abstract}    
We give a systematic construction of epimorphisms
between $2$--bridge link groups.
Moreover, we show that $2$--bridge links having
such an epimorphism between their link groups
are related by a map between the ambient spaces
which only have a certain specific kind of singularity.
We show applications of these epimorphisms
to the character varieties for $2$--bridge links
and $\pi_1$--dominating maps among $3$--manifolds.
\end{abstract}

\maketitle


\section{Introduction}
\label{Sec:Introduction}

For a knot or a link, $K$, in $S^3$,
the fundamental group $\pi_1(S^3-K)$
of the complement is called
the \textit{knot group} or the \textit{link group}
of $K$, and is denoted by $G(K)$.
This paper is concerned with the following problem.
\begin{center}\begin{minipage}{26pc}
{\sl For a given knot (or link) $K$,
characterize which knots (or links) $\tilde{K}$
admit an epimorphism $G(\tilde{K}) \to G(K)$.}
\end{minipage}\end{center}
This topic has been studied in various places in the literature,
and, in particular,
a complete classification has been obtained
when $K$ is the $(2,p)$ torus knot and $\tilde{K}$ is a $2$--bridge knot,
and when $K$ and $\tilde{K}$ are prime knots with up to 10 crossings;
for details, see \fullref{sec.ekg}.
A motivation for considering such epimorphisms is that
they induce a partial order on the set of prime knots
(see \fullref{sec.ekg}),
and we expect that new insights into the theory of knots
may be obtained in the future by studying such a structure,
in relation with topological properties and
algebraic invariants of knots related to knot groups.

In this paper, we give
a systematic construction of epimorphisms
between\linebreak $2$--bridge link groups.
We briefly review $2$--bridge links; for details
see \fullref{Sec:tangle}.\linebreak
For $r \in \QQQ:=\QQ\cup\{\infty\}$,
the $2$--bridge link $K(r)$
is the link obtained by gluing two trivial $2$--component tangles in $B^3$
along $(S^2, \mbox{$4$ points})$
where the loop in $S^2-(\mbox{$4$ points})$
of slope $\infty$ is identified with that of slope $r$,
namely
the double cover of the gluing map
\big($\in {\rm Aut}(T^2) = SL(2,\ZZ)$\big)
takes $\infty$ to $r$,
where $SL(2,\ZZ)$ acts on $\QQQ$ by the linear fractional 
transformation.
To be more explicit,
for a continued fraction expansion
\begin{center}\begin{picture}(230,70)
\put(0,48){$\displaystyle{
r=[a_1,a_2, \cdots,a_{m}] =
\cfrac{1}{a_1+
\cfrac{1}{ \raisebox{-5pt}[0pt][0pt]{$a_2 \, + \, $}
\raisebox{-10pt}[0pt][0pt]{$\, \ddots \ $}
\raisebox{-12pt}[0pt][0pt]{$+ \, \cfrac{1}{a_{m}}$}
}} \ ,}$}
\end{picture}\end{center}
a plat presentation of $K(r)$ is given as shown in \fullref{fig.fd}.

We give a systematic construction of epimorphisms
between $2$--bridge link groups
in the following theorem,
which is proved in \fullref{Sec:algebraic proof}.

\begin{Theorem}
\label{Thm:epimorphism1}
There is an epimorphism from
the $2$--bridge link group $G(K(\tilde r))$ to
the $2$--bridge link group $G(K(r))$,
if $\tilde r$ belongs to the $\RGPP{r}$--orbit of $r$ or $\infty$.
Moreover the epimorphism sends the upper meridian pair of
$K(\tilde r)$ to that of $K(r)$.
\end{Theorem}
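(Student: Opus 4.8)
The plan is to work entirely with the standard two--generator one--relator presentation of a $2$--bridge link group recalled in \fullref{Sec:tangle}, in which the two generators are exactly the upper meridian pair, and to produce the epimorphism by mapping generators to generators. Writing $r=[\cfr]=[a_1,\dots,a_m]$ and letting $w_r$ be the relator word in the meridian generators $a,b$ determined by the sign sequence $\epsilon_i=(-1)^{\lfloor i\beta/\alpha\rfloor}$, I would record $G(K(r))=\langle a,b \svert a\,w_r = w_r\,b\rangle$ (with the meridian on the right adjusted according to the parity that distinguishes the knot and link cases), and the parallel presentation $G(K(\tilde r))=\langle \tilde a,\tilde b \svert \tilde a\,w_{\tilde r}=w_{\tilde r}\,\tilde b\rangle$ whose generators are the upper meridian pair of $K(\tilde r)$. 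The candidate is $\phi\colon \tilde a\mapsto a,\ \tilde b\mapsto b$. Because the upper meridian pair generates the whole link group, $\phi$ is surjective as soon as it is shown to be a well--defined homomorphism, and the ``moreover'' clause holds by construction; so the entire content lies in verifying that $\phi$ respects the relator.

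The structural input I would isolate first is a purely combinatorial description of the two relevant orbits in terms of continued fractions. I would prove a lemma saying that every $\tilde r$ in the $\RGPP{r}$--orbit of $r$, or of $\infty$, admits a continued fraction expansion obtained by concatenating copies of the vector $\cfr$ and of its reverse, separated by controlled connecting entries---schematically $\tilde r=[\,\cfr,c_1,\cfr,c_2,\cfr,\dots\,]$, where some blocks are reversed and the orbit of $\infty$ is the companion case in which the initial/terminal block is trivial. This is the bridge between the abstract action of $\RGPP{r}$ on $\QQQ$ and the word $w_{\tilde r}$: each block $\cfr$ contributes one copy of $w_r$ (and each reversed block a reverse/inverse variant) to $w_{\tilde r}$. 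I would establish the lemma by induction on word length in the generators of $\RGPP{r}$, checking that each generator acts on $\QQQ$ by an explicit linear fractional transformation effecting exactly one such concatenation step.

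Granting the lemma, the block decomposition of $w_{\tilde r}$ becomes an identity in the free group on $a,b$ expressing $w_{\tilde r}$ as a product of (reversed, conjugated) copies of $w_r$. Applying the target relation $a\,w_r=w_r\,b$ repeatedly---equivalently $w_r a w_r^{-1}=b$ and its companions---I would show that $\phi(\tilde a\,w_{\tilde r})=\phi(w_{\tilde r}\,\tilde b)$ holds in $G(K(r))$, i.e.\ that the image of the source relator lies in the normal closure of the target relator. The delicate point is to keep track of how the roles of the two meridians are exchanged as one passes from one block to the next: the connecting entries $c_i$ dictate whether a block begins at $a$ or at $b$, and one must verify that the boundary meridian shared by consecutive blocks is consistently the image of an upper meridian.

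The main obstacle I anticipate is precisely this interface bookkeeping. Proving that the blockwise decomposition is compatible with the relation of $G(K(r))$ requires a careful parity analysis of the sign sequences $\epsilon_i$ defining $w_r$ and $w_{\tilde r}$, together with a check that the meridian appearing at each block boundary is the correct one. I would organize this as a single induction that simultaneously carries the partial word and the ``current'' meridian pair, so that the inductive hypothesis furnishes exactly the interface datum needed to append the next block and invoke the target relation once. If the equivariance $\RGPP{\gamma(r)}=\gamma\,\RGPP{r}\,\gamma^{-1}$ is available, it would further let me reduce to the generators of $\RGPP{r}$ and build longer epimorphisms by composition, keeping each inductive step to one checkable move.
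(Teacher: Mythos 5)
Your candidate map is the right one: the paper's epimorphism is induced by the identity on $\pi_1$ of the four-punctured sphere, so in the upper presentations it is exactly $\tilde a\mapsto a,\ \tilde b\mapsto b$, and surjectivity plus the ``moreover'' clause are indeed automatic. Your structural lemma is also a true statement --- it is \fullref{Prop:continued fraction} of the paper, whose precise form matters: the blocks alternate between $\cfr$ and $\cfr^{-1}$, each block carries a sign, and the connecting entries are \emph{even}, $2c_i$. But note that the paper does not use this lemma to prove \fullref{Thm:epimorphism1} at all. Its proof in \fullref{Sec:algebraic proof} is topological: the key input (\fullref{Lem:Komori-Series}, a reformulation of a result of Komori--Series) is that simple loops $\alpha_s,\alpha_{s'}$ with $s'\in\Gamma_r\cdot s$ are freely homotopic in the tangle complement $B^3-t(r)$; alternating this for $\Gamma_r$ and $\Gamma_\infty$ shows $\alpha_{\tilde r}$ is nullhomotopic in $S^3-K(r)$, hence $\llangle\alpha_\infty,\alpha_{\tilde r}\rrangle\subset\llangle\alpha_\infty,\alpha_r\rrangle$, and the epimorphism falls out with no word computations and no continued fractions; the normal form is reserved for the branched-fold-map \fullref{Thm:epimorphism2}.

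The genuine gap in your plan is the step converting the continued-fraction block structure into a free-group identity: the assertion that ``each block $\cfr$ contributes one copy of $w_r$ (and each reversed block a reverse/inverse variant) to $w_{\tilde r}$'' is false as stated, and the correct replacement is the real mathematical content of the theorem. Concretely, take $r=1/3$ (so $w_r=ba$, relation $aba=bab$) and $\tilde r=1/9=[3,0,3,0,3]$: the expansion has three blocks, but $w_{\tilde r}=(ba)^4$ contains four copies of $w_r$; and verifying $a(ba)^4=(ba)^4b$ from $aba=bab$ needs more than one application of the relation per block (the natural argument uses that $(ba)^3$ is central in the trefoil group). For nonzero connectors the situation is worse: the sign sequence $(-1)^{\lfloor i\tilde q/\tilde p\rfloor}$ defining $w_{\tilde r}$ is not visibly a concatenation of per-block copies of the sign sequence of $w_r$, and any correct bookkeeping must use the evenness of the connectors, which your sketch never invokes. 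That it must be used is shown by $[3,1,3]=4/15$ (the knot $7_4$): a perfectly good concatenation of blocks of $\cfr=(3)$ with an odd connector, yet $G(K(4/15))$ admits no epimorphism onto the trefoil group (its Alexander polynomial $4t^2-7t+4$ is not divisible by $t^2-t+1$; equivalently, by the Gonzal\'ez-Ac\~una--Ram\'inez theorem quoted in \fullref{sec:questions}, $4/15$ is not in the orbit). So the induction you defer as ``interface bookkeeping'' is not bookkeeping: carried out correctly it amounts to reconstructing the matrix/braid factorization of \fullref{prop.hGr_Ktr} and the piece-by-piece construction of \fullref{thm.K_tildeK} --- that is, the paper's proof of the stronger \fullref{Thm:epimorphism2} --- whereas for \fullref{Thm:epimorphism1} alone it can be bypassed entirely by the homotopy lemma.
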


Here, we define the $\hgr$--action on $\QQQ$ below,
and we give the definition of an upper meridian pair
in \fullref{Sec:tangle}.
For some simple values of $r$,
the theorem is reduced to
Examples \ref{ex.r=infty}--\ref{ex.r=half_integer}.

The $\hgr$--action on $\QQQ$ is defined as follows.
Let $\DD$ be the \textit{modular tessellation}, that is,
the tessellation of the upper half
space $\HH^2$ by ideal triangles which are obtained
from the ideal triangle with the ideal vertices $0, 1,
\infty \in \QQQ$ by repeated reflection in the edges.
Then $\QQQ$ is identified with the set of the ideal vertices of $\DD$.
For each $r\in \QQQ$,
let $\RGP{r}$ be the group of automorphisms of
$\DD$ generated by reflections in the edges of $\DD$
with an endpoint $r$.
It should be noted that $\RGP{r}$
is isomorphic to the infinite dihedral group
and the region bounded by two adjacent edges of $\DD$
with an endpoint $r$ is a fundamental domain
for the action of $\RGP{r}$ on $\HH^2$.
Let $\RGPP{r}$ be the group generated by $\RGP{r}$ and $\RGP{\infty}$.
When $r\in \QQ - \ZZ$,
$\RGPP{r}$ is equal to the free product $\RGP{r}*\RGP{\infty}$,
having a fundamental domain shown in \fullref{fig.fd}.
When $r \in \ZZ \cup \{ \infty \}$,
we concretely describe $\hgr$
in \fullref{ex.r=infty} and \fullref{ex.r=integer}.
By using the fundamental domain
of the group $\RGPP{r}$,
we can give a practical algorithm to determine
whether a given rational number $\tilde r$
belongs to the $\RGPP{r}$--orbit of $\infty$ or $r$
(see \fullref{Sec:Continued fraction}).
In fact, \fullref{Prop:continued fraction}
characterizes such a rational number $\tilde r$
in terms of its continued fraction expansion.

\begin{figure}[ht!]
\begin{center}
\begin{picture}(350,165)
\put(0,69){\pc{f2}{0.4}}
\put(-8,72){\fns $\infty$}
\put(-2,87){\fns $1$}
\put(3,119){\fns $1/2$}
\put(110,142){\fns $1/3=[3]$}
\put(150,85){\fns $3/10$}
\put(150,71){\fns $5/17 = [3,2,2] = r$}
\put(148,53){\fns $2/7=[3,2]$}
\put(104,1){\fns $1/4$}
\put(4,37){\fns $0$}
\put(190,120){\pc{f3}{0.5}}
\put(185,147){\small $\infty$}
\put(198,149){\large $\overbrace{\hspace*{4.5pc}}^{a_1=3 \ }$}
\put(250,158){\small $[3]$}
\put(257,149){$\overbrace{\hspace*{4.5pc}}^{\ a_3=2}$}
\put(318,147){\small $[3,2,2]=r$}
\put(216,101){\small $0$}
\put(227,102){$\underbrace{\hspace*{4.5pc}}_{a_2=2}$}
\put(288,101){\small $[3,2]$}
\put(195,28){$K(r)=$}
\put(224,27){\pc{f4}{0.6}}
\put(242,45){\small $3$ half-twists}
\put(305,45){\small $2$ half-twists}
\put(265,-7){\small $(-2)$ half-twists}
\end{picture}
\end{center}
\caption{
A fundamental domain of $\hgr$ in the modular tessellation
(the shaded domain),
the linearization of (the core part of) the fundamental domain,
and the $2$--bridge knot $K(r)$, for $r=5/17=[3,2,2]$}
\label{fig.fd}
\end{figure}
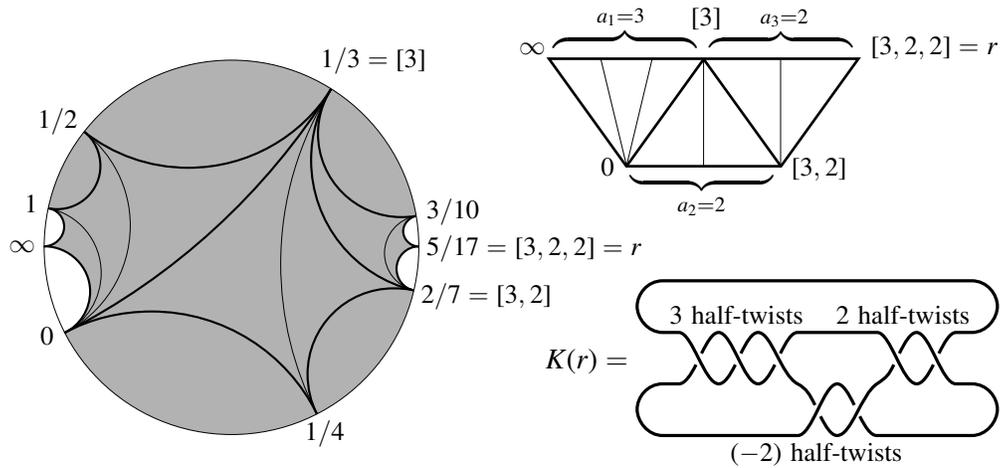

Now we study topological characterization of a link $\tilde{K}$
having an epimorphism $G(\tilde{K}) \to G(K)$
for a given link $K$.
We call a continuous map $f\co(S^3,\tilde{K}) \to (S^3,K)$
\textit{proper} if $\tilde{K} = f^{-1}(K)$.
Since a proper map induces a map between link complements,
it further induces a homomorphism $G(\tilde{K})\to G(K)$
preserving peripheral structure.
Conversely, any epimorphism $G(\tilde{K})\to G(K)$
for a nonsplit link $K$,
preserving peripheral structure,
is induced by some proper map $(S^3,\tilde{K}) \to (S^3,K)$,
because the complement of a nonsplit link is aspherical.
Thus, we can obtain $\tilde{K}$ as $f^{-1}(K)$
for a suitably chosen map $f\co S^3 \to S^3$;
in \fullref{q.const_tK} we propose a conjecture
to characterize $\tilde{K}$ from $K$ in this direction.

For $2$--bridge links, we have
the following theorem which implies that,
for each epimorphism $G(K(\tilde r)) \to G(K(r))$
in \fullref{Thm:epimorphism1},
we can topologically characterize $K(\tilde{r})$
as the preimage $f^{-1}(K(r))$
for some specific kind of a proper map $f$.
For the proof of the theorem,
see \fullref{sec.tr_Ktr}, \fullref{sec:construction1}
and \fullref{fig.plan}.

\begin{Theorem}
\label{Thm:epimorphism2}
If $\tilde r$ belongs to the $\RGPP{r}$--orbit of $r$ or $\infty$,
then there is a proper branched fold map
$f\co (S^3,K(\tilde r))\to(S^3,K(r))$
which induces an epimorphism $G(K(\tilde r)) \to G(K(r))$,
such that
its fold surface is the disjoint union of level $2$--spheres
and its branch curve is a link of index 2 disjoint to $K(\tilde{r})$,
each of whose components lie in a level $2$--sphere.
\end{Theorem}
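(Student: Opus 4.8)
The plan is to realize the algebraic epimorphism of \fullref{Thm:epimorphism1} by an explicit geometric ``unfolding'' of $K(\tilde r)$ onto $K(r)$ governed by the modular tessellation $\DD$. I would begin by fixing a height function $h\co S^3\to\RR$ whose regular level sets are $2$--spheres and for which a $2$--bridge link sits as a plat, so that $(S^3,K(r))$ is cut by level $2$--spheres into a stack of trivial $2$--component tangles whose successive gluing (Conway) slopes trace out the edge--path in $\DD$ from $\infty$ to $r$ determined by the continued fraction $[a_1,\dots,a_m]$, as in \fullref{fig.fd}. The same prescription applied to $\tilde r$ produces a stack for $(S^3,K(\tilde r))$, with source height function $\tilde h$, whose edge--path from $\infty$ to $\tilde r$ is—by the hypothesis $\tilde r\in\hgr\!\cdot\!\{r,\infty\}$ together with the continued--fraction criterion of \fullref{Prop:continued fraction}—obtained from the path for $r$ by a sequence of reflections in the edges of $\DD$ incident to $r$ or to $\infty$. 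Encoding these reflections as the turning points of a piecewise--linear ``zigzag'' $\phi$ of the height interval onto itself gives the combinatorial blueprint for the map.

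With this blueprint I would define $f\co(S^3,K(\tilde r))\to(S^3,K(r))$ slab by slab so that $h\circ f=\phi\circ\tilde h$. On each slab where $\phi$ is monotone, $f$ is a tangle--preserving homeomorphism onto the corresponding target slab, chosen to carry the four strands to the four strands and the upper meridian pair to the upper meridian pair. At each turning point of $\phi$ the two adjacent source slabs are mirror images across a level $2$--sphere and must both map onto a single target slab; there $f$ is declared to be the fold associated with the involution of the Conway sphere that realizes the corresponding generator of $\RGP{r}$ or $\RGP{\infty}$. The fold surface is then exactly the union of these turning--point level $2$--spheres, which are disjoint by construction, giving the first half of the conclusion.

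The delicate point, and the technical heart of the argument, is the local model at each fold: a reflection of the $4$--punctured Conway sphere is not isotopic to the identity, so one cannot fold the two slabs together by a plain fold in the height direction and stay continuous while keeping the four strands matched. I would resolve this by taking the normal form of $f$ near a fold sphere $\Sigma$ to be a genuine fold in the height direction away from a single circle $C\subset\Sigma$, together with the branched model $z\mapsto z^2$ transverse to $C$, where $C$ is the fixed circle of the reflection involution. The freedom in positioning the $2$--bridge sphere lets me place $C$ off the four strands, so each branch component is an index--$2$ curve lying in a level $2$--sphere and disjoint from $K(\tilde r)$, exactly as claimed; verifying that these branched folds glue across consecutive slabs into a globally well--defined map and that $K(\tilde r)=f^{-1}(K(r))$ (properness) is the main check. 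Finally, since $f$ is built to respect the stacking and to send the upper meridian pair to the upper meridian pair, the induced homomorphism $G(K(\tilde r))\to G(K(r))$ coincides with the epimorphism of \fullref{Thm:epimorphism1}, and in particular is surjective. I expect the hardest part to be pinning down this fold--plus--branch normal form so that the branch curve has index $2$, stays disjoint from the link, and is compatible across successive folds.
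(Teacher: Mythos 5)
Your overall strategy --- cutting both links into slabs along level $2$--spheres guided by \fullref{Prop:continued fraction}, mapping height-monotone slabs homeomorphically, and reversing the height direction at the reflection points --- is the same as the paper's, which factors the proof through \fullref{prop.hGr_Ktr} and the explicit construction of \fullref{thm.K_tildeK}. However, your resolution of what you yourself identify as the technical heart is not valid, and this is a genuine gap rather than a missing detail. You propose a local normal form at a turning sphere $\Sigma$ consisting of ``a genuine fold away from a single circle $C\subset\Sigma$, together with the branched model $z\mapsto z^2$ transverse to $C$''. No branched fold map can have this form: in the index~$2$ branch model $(z,x_3)\mapsto(z^2,x_3)$, every point of a whole neighborhood of a branch point is either a branch point or a regular point, so fold points can never accumulate on the branch curve; equivalently, the branch curve of a branched fold map is disjoint from the closure of the fold surface, whereas your model puts $C$ in the closure of the fold locus $\Sigma-C$. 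It is not even consistent as a prescription for a continuous map: if $f$ identifies $(x,t)$ with $(x,-t)$ near points of $\Sigma-C$, continuity forces the same identification along $C$, producing a fold (a non-open map) there, not the open model $z\mapsto z^2$. Note also that your construction would contradict the very statement being proved, which requires the fold surface to be a disjoint union of \emph{entire} level $2$--spheres, not spheres minus circles.

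The correct resolution, which is the content of the proof of \fullref{thm.K_tildeK} (see \fullref{fig.1} and \fullref{fig.2}), is a case division according to the involution relating the two adjacent $b$--domains. If they are related by the $\pi$--rotation $(x,t)\mapsto(\sigma(x),-t)$, where $\sigma$ is the reflection of the Conway sphere with fixed circle $C$, then the map on the intermediate $2c_i$--twist region is the quotient by this rotation: a double branched covering with a single index~$2$ branch circle $C$ in the central level sphere and \emph{no fold points at all} --- a point of $\Sigma-C$ is regular, because the quotient identifies it with a different point of $\Sigma$ (its $\sigma$--image), so the map is locally injective there. If instead the two adjacent $b$--domains are related by the mirror $(x,t)\mapsto(x,-t)$, then the map does fold along the entire central level sphere; but then, precisely because of the strand-matching obstruction you point out, the twist region forces \emph{two} additional index~$2$ branch circles lying in other level $2$--spheres, disjoint from the fold sphere. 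This dichotomy --- either a branch circle with no fold, or a complete fold sphere with the branch circles pushed to other levels --- is what yields a genuine branched fold map whose fold surface is a union of level $2$--spheres and whose branch curve is an index~$2$ link in level spheres disjoint from $K(\tilde r)$; your single hybrid model cannot be repaired without reproducing it.
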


Here, we explain the terminology in the theorem.
More detailed properties of the map $f$ are given
in \fullref{rem.f1} and \fullref{rem.f2}.

By a \textit{branched fold map}, we mean a map between $3$--manifolds
such that, for each point $p$ in the source manifold,
there exist local coordinates around $p$ and $f(p)$
such that $f$ is given by one of the following formulas
in the neighborhood of $p$:
\begin{align*}
& f(x_1,x_2,x_3) = (x_1,x_2,x_3) \\
& f(x_1,x_2,x_3) = (x_1^2,x_2,x_3) \\
& f(z,x_3) = (z^n,x_3) \qquad ( z = x_1 + x_2 \sqrt{-1} \, )
\end{align*}
When $p$ and $f(p)$ have such coordinates around them,
we call $p$
a \textit{regular point},
a \textit{fold point} or
a \textit{branch point} of \textit{index} $n$, accordingly.
The set of fold points forms a surface in the source manifold,
which we call the \textit{fold surface} of $f$.
The set of branch points forms a link in the source manifold,
which we call the \textit{branch curve} of $f$.
(If $f$ further allowed ``fold branch points''
which are defined by $f(x_1,z)=(\smash{x_1^2},z^2)$
for suitable local coordinates
where $z = x_2 + x_3 \sqrt{-1}$,
and if the index of every branch point is $2$,
$f$ is called a ``nice'' map in Honma \cite{Homma}.
It is shown \cite{Homma} that
any continuous map between $3$--manifolds
can be approximated by a ``nice'' map.)

A \textit{height function} for $K(r)$ is a function
$h:S^3\to [-1,1]$ such that
$h^{-1}(t)$ is a $2$--sphere intersecting $K(r)$ transversely in four 
points
or a disk intersecting $K(r)$ transversely in two points in its interior
according as $t\in (-1,1)$ or $\{\pm 1\}$.
We call the $2$--sphere $h^{-1}(t)$ with $t\in (-1,1)$ a \textit{level} 
$2$--sphere.

\fullref{Thm:epimorphism1} and \fullref{Thm:epimorphism2}
have applications to the character varieties of $2$--bridge links
and $\pi_1$--dominating maps among $3$--manifolds.
These are given in \fullref{sec.icv} and \fullref{Sec:pi_1-dominating 
maps}.

The paper is organized as follows;
see also \fullref{fig.plan} for a sketch plan
to prove \fullref{Thm:epimorphism1} and \fullref{Thm:epimorphism2}.
In \fullref{sec.ekg},
we quickly review known facts concerning
epimorphisms between knot groups,
in order to explain background and motivation
for the study of epimorphisms between knot groups.
In \fullref{Sec:tangle}, we review basic properties of $2$--bridge 
links.
In \fullref{Sec:algebraic proof},
we prove \fullref{Thm:epimorphism1},
constructing epimorphisms
$G(K(\tilde{r})) \to G(K(r))$.
In \fullref{sec.tr_Ktr},
we show that
if $\tilde{r}$ belongs to the $\hgr$--orbit of $r$ or $\infty$,
then $\tilde{r}$ has a continued fraction expansion
of a certain specific form
in \fullref{Prop:continued fraction},
and equivalently $K(\tilde{r})$ has a plat presentation
of a certain specific form
in \fullref{prop.hGr_Ktr}.
In \fullref{sec:construction1},
we give an explicit construction of the desired proper map
$(S^3,K(\tilde{r})) \to (S^3,K(r))$
under the setting of \fullref{prop.hGr_Ktr}
(see \fullref{thm.K_tildeK}).
This together with \fullref{prop.hGr_Ktr}
gives the proof of \fullref{Thm:epimorphism2}.
We also describe further properties of the map
in \fullref{sec:construction1}.
In \fullref{sec.icv} and \fullref{Sec:pi_1-dominating maps},
we show applications of
\fullref{Thm:epimorphism1} and \fullref{Thm:epimorphism2}
to the character varieties for $2$--bridge links
and $\pi_1$--dominating maps among $3$--manifolds.
In \fullref{sec:questions},
we propose some questions
related to \fullref{Thm:epimorphism1} and \fullref{Thm:epimorphism2}.

\begin{figure}[ht!]
\begin{center}
\begin{picture}(320,180)
\put(30,150){\framebox(220,25){\fns
$\tilde{r}$ belongs to the $\hgr$--orbit of $r$ or $\infty$}}
\put(40,140){\vector(-1,-4){27}}
\put(-25,110){\fns \fullref{Thm:epimorphism1}}
\put(60,95){\line(0,1){45}}
\put(60,95){\vector(3,-2){95}}
\put(50,55){\fns \fullref{Thm:epimorphism2}}
\put(140,140){\vector(0,-1){30}}
\put(145,120){\fns \fullref{Prop:continued fraction}}
\put(240,140){\vector(1,-2){15}}
\put(253,120){\fns \fullref{prop.hGr_Ktr}}
\put(120,70){\framebox(90,30){
\shortstack[l]{\fns $\tilde{r}$ is presented as \\
\fns in \fullref{Prop:continued fraction}. }}}
\put(215,80){$\Longleftrightarrow$}
\put(240,70){\framebox(100,30){
\shortstack[l]{\fns $K(\tilde{r})$ is presented as \\
\fns in \fullref{prop.hGr_Ktr}. }}}
\put(117,67){\framebox(226,36){\qquad}}
\put(237,62){\vector(-1,-2){15}}
\put(237,45){\fns \fullref{thm.K_tildeK}}
\put(145,-5){\framebox(160,30){
\shortstack[l]{\fns There exists a branched fold map \\
\fns $(S^3,K(\tilde{r})) \to (S^3,K(r))$.}}}
\put(120,10){$\Longleftarrow$}
\put(-25,-5){\framebox(140,30){
\shortstack[l]{\fns There exists an epimorphism \\
\fns $G(K(\tilde{r})) \to G(K(r))$.}}}
\end{picture}
\end{center}
\caption{\label{fig.plan}
Sketch plan to prove
\fullref{Thm:epimorphism1} and \fullref{Thm:epimorphism2}}
\end{figure}

\textbf{Personal history}\qua
This paper is actually an expanded version
of the unfinished joint paper \cite{Ohtsuki-Riley}
by the first and second authors
and the announcement \cite{Sakuma} by the third author.
As is explained in the introduction of Riley \cite{Riley3},
the first and second authors proved \fullref{thm.K_tildeK},
motivated by the study of reducibility of
the space of irreducible $SL(2,\CC)$ representations of
$2$--bridge knot groups,
and obtained (a variant) of \fullref{Cor:character variety1}.
On the other hand,
the last author discovered \fullref{Thm:epimorphism1} while doing joint
research \cite{ASWY2} with
H\,Akiyoshi, M\,Wada and Y\,Yamashita
on the geometry of $2$--bridge links.
This was made when he was visiting G\,Burde in the summer of 1997,
after learning several examples found by Burde and his student, 
F\,Opitz,
through computer experiments on representation spaces.
The first and third authors realized that
\fullref{Thm:epimorphism1} and \fullref{thm.K_tildeK}
are equivalent in the autumn of 1997,
and agreed to write a joint paper with the second author.
But, very sadly, the second author passed away on March 4, 2000,
before the joint paper was completed.
May Professor Robert Riley rest in peace.

\medskip{\bf Acknowledgments}\qua
The first author would like to thank Osamu Saeki
for helpful comments.
The third author would like to thank
Gerhard Burde and Felix Opitz for teaching him
of their experimental results on $2$--bridge knot groups.
He would also like to thank Michel Boileau,
Kazuhiro Ichihara and Alan Reid for useful information.
The first and the third authors would like to thank
Teruaki Kitano and Masaaki Suzuki
for informing the authors of their recent results.
They would also like to thank
Andrew Kricker, Daniel Moskovich and Kenneth Shackleton
for useful information.
Finally, they would like to thank the referee
for very careful reading and helpful comments.

\section{Epimorphisms between knot groups}
\label{sec.ekg}

In this section,
we summarize topics and known results
related to epimorphisms between knot groups,
in order to give background and motivation
to study epimorphisms between knot groups.

We have a partial order on the set of prime knots,
by setting $\tilde K\ge K$ if there is an
epimorphism $G(\tilde{K}) \to G(K)$.
A nontrivial part of the proof is to show that
$K_1 \ge K_2$ and $K_1 \le K_2$ imply $K_1 = K_2$,
which is shown from the following two facts.
The first one is that
we have a partial order on the set of knot groups of all knots;
its proof is due to the Hopfian property
(see, for example, Silver--Whitten \cite[Proposition 3.2]{Silver-Whitten}).
The second fact is that
prime knots are determined by their knot groups
(see, for example, Kawauchi \cite[Theorem 6.1.12]{Kawauchi_book}).

The existence and nonexistence of epimorphisms between knot groups
for some families of knots have been determined.
Gonzal\'ez-Ac\~una and Ram\'inez \cite{Gonzalez-Raminez1}
gave a certain topological characterization of those knots
whose knot groups have epimorphisms to torus knot groups,
in particular, they determined in \cite{Gonzalez-Raminez2} the
$2$--bridge knots whose knot groups have epimorphisms
to the $(2,p)$ torus knot group.
Kitano--Suzuki--Wada \cite{Kitano-Suzuki-Wada}
gave an effective criterion for the existence of an
epimorphism among two given knot groups, in terms of
the twisted Alexander polynomials,
extending the well-known criterion
that the Alexander polynomial of $\tilde{K}$ is divisible by that of $K$
if there is an epimorphism $G(\tilde{K})\to G(K)$.
By using the criterion,
Kitano--Suzuki \cite{Kitano-Suzuki}
gave a complete list of such pairs $(\tilde{K},K)$
among the prime knots with up to 10 crossings.

The finiteness of $K$
admitting an epimorphism $G(\tilde{K}) \to G(K)$
for a given $\tilde{K}$
was conjectured by Simon \cite[Problem 1.12]{Kirby},
and it was partially solved by
Boileau--Rubinstein--Wang \cite{Boileau-Rubinstein-Wang},
under the assumption that the epimorphisms are induced by
nonzero degree proper maps.

A systematic construction of epimorphisms between knot groups
is given by Kawauchi's imitation theory \cite{Kawauchi};
in fact,
his theory constructs an imitation $\tilde{K}$ of $K$
which shares various topological properties with $K$,
and, in particular,
there is an epimorphism between their knot groups.

{}From the viewpoint of maps between ambient spaces,
any epimorphism $G(\tilde{K})\to G(K)$
for a nonsplit link $K$,
preserving peripheral structure,
is induced by some proper map $f\co (S^3,\tilde{K}) \to (S^3,K)$,
as mentioned in the introduction.
The index of the image $f_*(G(\tilde{K}))$ in $G(K)$
is a divisor of the degree of $f$  (see Hempel \cite[Lemma 15.2]{Hemple}).
In particular, if $f$ is of degree $1$,
then $f_*$ induces an epimorphism between the knot groups.
Thus the problem of epimorphisms between knot groups
is related to the study of proper maps between ambient spaces,
more generally, maps between $3$--manifolds.
This direction has been extensively studied
in various literatures
(see Boileau--Wang \cite{Boileau-Wang}, Boileau--Rubinstein--Wang \cite{Boileau-Rubinstein-Wang}, Kawauchi \cite{Kawauchi},
Reid--Wang--Zhou \cite{Reid-Wang-Zhou},  Silver--Whitten \cite{Silver-Whitten}, Soma \cite{Soma1,Soma2}, Wang \cite{Wang}
and references therein).

\section[Rational tangles and 2-bridge links]{Rational tangles and $2$--bridge links}
\label{Sec:tangle}

In this section, we recall basic definitions and
facts concerning the $2$--bridge knots and links.

Consider the discrete group, $H$, of isometries
of the Euclidean plane $\RR^2$
generated by the $\pi$--rotations around
the points in the lattice $\ZZ^2$.
Set $\Conways=(\RR^2,\ZZ^2)/H$
and call it the \textit{Conway sphere}.
Then $\Conway$ is homeomorphic to the $2$--sphere,
and $\PP$ consists of four points in $\Conway$.
We also call $\Conway$ the Conway sphere.
Let $\PConway:=\Conway-\PP$ be the complementary
$4$--times punctured sphere.
For each $r \in \QQQ:=\QQ\cup\{\infty\}$,
let $\alpha_r$ be the simple loop in $\PConway$
obtained as the projection of the line in $\RR^2-\ZZ^2$
of slope $r$.
Then $\alpha_r$ is \textit{essential} in $\PConway$,
ie, it does not bound a disk in $\PConway$
and is not homotopic to a loop around a puncture.
Conversely, any essential simple loop in $\PConway$
is isotopic to $\alpha_r$ for
a unique $r\in\QQQ$.
Then $r$ is called the \textit{slope} of the simple loop.
Similarly, any simple arc $\delta$
in $\Conway$ joining two
different points in $\PP$ such that
$\delta\cap \PP=\partial\delta$
is isotopic to the image of a line in $\RR^2$
of some slope $r\in\QQQ$
which intersects $\ZZ^2$.
We call $r$ the \textit{slope} of $\delta$.

A \textit{trivial tangle} is a pair $(B^3,t)$,
where $B^3$ is a 3-ball and $t$ is a union of two
arcs properly embedded in $B^3$
which is parallel to a union of two
mutually disjoint arcs in $\partial B^3$.
Let $\tau$ be the simple unknotted arc in $B^3$
joining  the two components of $t$
as illustrated in \fullref{fig.trivial-tangle}.
We call it the \textit{core tunnel} of the trivial tangle.
Pick a base point $x_0$ in $\interiorop \tau$,
and let $(\mu_1,\mu_2)$ be the generating pair
of the fundamental group $\pi_1(B^3-t,x_0)$
each of which is represented by
a based loop consisting of
a small peripheral simple loop
around a component of $t$ and
a subarc of $\tau$ joining the circle to $x$.
For any base point $x\in B^3-t$,
the generating pair of $\pi_1(B^3-t,x)$
corresponding to the generating pair $(\mu_1,\mu_2)$
of $\pi_1(B^3-t,x_0)$ via a path joining $x$ to $x_0$
is denoted by the same symbol.
The pair $(\mu_1,\mu_2)$ is unique up to
(i) reversal of the order,
(ii) replacement of one of the members with its inverse,
and (iii) simultaneous conjugation.
We call the equivalence class of $(\mu_1,\mu_2)$
the \textit{meridian pair} of the fundamental
group $\pi_1(B^3-t)$.

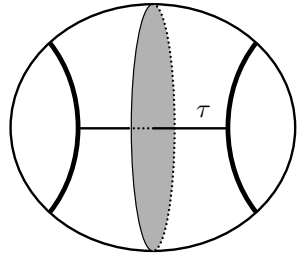
\begin{figure}[ht!]
$$
\pc{f5}{0.9}
\begin{picture}(0,0)
\put(-45,9){\small$\tau$}
\end{picture}
$$
\caption{\label{fig.trivial-tangle}
A trivial tangle}
\end{figure}

By a \textit{rational tangle},
we mean a trivial tangle $(B^3,t)$
which is endowed with a homeomorphism from
$\partial(B^3,t)$ to $\Conways$.
Through the homeomorphism we identify
the boundary of a rational tangle with the Conway sphere.
Thus the slope of an essential simple loop in
$\partial B^3-t$ is defined.
We define
the \textit{slope} of a rational tangle
to be the slope of
an essential loop on $\partial B^3 -t$ which bounds a disk in $B^3$
separating the components of $t$.
(Such a loop is unique up to isotopy
on $\partial B^3 -t$ and is called a \textit{meridian}
of the rational tangle.)
We denote a rational tangle of slope $r$ by
$\rtangle{r}$.
By van Kampen's theorem, the fundamental group
$\pi_1(B^3-t(r))$ is identified
with the quotient
$\pi_1(\PConway)/\llangle\alpha_r \rrangle$,
where $\llangle\alpha_r \rrangle$ denotes the normal
closure.

For each $r\in \QQQ$,
the \textit{$2$--bridge link $K(r)$ of slope $r$}
is defined to be the sum of the rational tangles of slopes
$\infty$ and $r$, namely,
$(S^3,K(r))$ is
obtained from $\rtangle{\infty}$ and
$\rtangle{r}$
by identifying their boundaries through the
identity map on the Conway sphere
$\Conways$. (Recall that the boundaries of
rational tangles are identified with the Conway sphere.)
$K(r)$ has one or two components according as
the denominator of $r$ is odd or even.
We call $\rtangle{\infty}$
and $\rtangle{r}$, respectively,
the \textit{upper tangle} and \textit{lower tangle}
of the $2$--bridge link.
The image of the core tunnels for $\rtangle{\infty}$
and $\rtangle{r}$ are called the
\textit{upper tunnels} and \textit{lower tunnel}
for the $2$--bridge link.

We describe a plat presentation of $K(r)$, as follows.
Choose a continued fraction expansion of $r$,
$$
r=[a_1,a_2,\cdots,a_m] .
$$
When $m$ is odd, we have a presentation,
$$
r = B \cdot \infty
\quad \mbox{ where }
B =
\mbox{\small $\begin{pmatrix} 1 & 0 \\ a_1 & 1 \end{pmatrix}$}
\mbox{\small $\begin{pmatrix} 1 & a_2 \\ 0 & 1 \end{pmatrix}$}
\mbox{\small $\begin{pmatrix} 1 & 0 \\ a_3 & 1 \end{pmatrix}$} \cdots
\mbox{\small $\begin{pmatrix} 1 & 0 \\ a_m & 1 \end{pmatrix}$} ,
$$
and $B$ acts on $\QQQ$
by the linear fractional transformation.
Then $K(r)$ has the following presentation,
where the boxed ``$a_i$'' implies $a_i$ half-twists.
\eject
$$
\begin{picture}(300,120)
\put(125,102){\fns monodromy $=B$}
\put(257,113){\vector(-1,0){180}}
\put(56,110){$T^2$}
\put(61,105){\vector(0,-1){35}}
\put(28,85){\shortstack[l]{\fns double \\ \fns cover}}
\put(264,110){$T^2$}
\put(267,105){\vector(0,-1){30}}
\put(270,85){\shortstack[l]{\fns double \\ \fns cover}}
\put(0,37){$K(r) = $}
\put(40,35){\pc{s17}{0.4}}
\put(84,39){\small $a_1$}
\put(116,30){\small $-a_2$}
\put(158,39){\small $a_3$}
\put(234,39){\small $a_m$}
\put(70,65){$\overbrace{\hspace*{18pc}}^{t(r)}$}
\put(43,17){$\underbrace{\hspace*{1.5pc}}_{\overline{t(\infty)}}$}
\put(270,17){$\underbrace{\hspace*{1.5pc}}_{t(\infty)}$}
\end{picture}
$$
Similarly, when $m$ is even:
$$
r =
\mbox{\small $\begin{pmatrix} 1 & 0 \\ a_1 & 1 \end{pmatrix}$}
\mbox{\small $\begin{pmatrix} 1 & a_2 \\ 0 & 1 \end{pmatrix}$}
\mbox{\small $\begin{pmatrix} 1 & 0 \\ a_3 & 1 \end{pmatrix}$} \cdots
\mbox{\small $\begin{pmatrix} 1 & a_m \\ 0 & 1 \end{pmatrix}$}
\cdot 0 ,
$$
$$
\begin{picture}(300,80)
\put(0,37){$K(r) = $}
\put(40,35){\pc{s18}{0.4}}
\put(84,39){\small $a_1$}
\put(116,30){\small $-a_2$}
\put(158,39){\small $a_3$}
\put(230,30){\small $-a_m$}
\put(70,65){$\overbrace{\hspace*{18pc}}^{t(r)}$}
\put(43,17){$\underbrace{\hspace*{1.5pc}}_{\overline{t(\infty)}}$}
\put(270,17){$\underbrace{\hspace*{1.5pc}}_{t(0)}$}
\end{picture}
$$
We recall Schubert's classification \cite{Schubert}
of the $2$--bridge links
(cf \cite{Burde-Zieschang}).

\begin{Theorem}[Schubert]
\label{Thm:Schubert}
Two $2$--bridge links $K(q/p)$ and $K(q'/p')$
are equivalent,
if and only if the following conditions hold.
\begin{enumerate}
\item
$p=p'$.
\item
Either $q\equiv \pm q'\pmod p$
or $qq'\equiv \pm 1\pmod p$.
\end{enumerate}
Moreover, if the above conditions are satisfied, 
there is a
homeomorphism
$$f\co (S^3,K(q/p))\to (S^3,K(q'/p'))$$
which satisfies the following conditions.
\begin{enumerate}
\item
If $q\equiv q'\pmod p$
or $qq'\equiv 1\pmod p$,
then $f$ preserves the orientation of $S^3$.
Otherwise, $f$ reverses the orientation of $S^3$.
\item
If $q\equiv \pm q'\pmod p$,
then $f$ maps the upper tangle of $K(q/p)$
to that of $K(q'/p')$.
If $qq'\equiv 1\pmod p$,
then $f$ maps the upper tangle of $K(q/p)$
to the lower tangle of $K(q'/p')$.
\end{enumerate}
\end{Theorem}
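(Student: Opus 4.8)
The plan is to reduce the whole statement to the mapping class group of the Conway sphere $\Conways$ and its action on slopes. Two facts organize the argument. First, a homeomorphism $\phi$ of $\Conways$ extends over the rational tangle $\rtangle{s}$ if and only if $\phi$ carries the slope-$s$ loop to a slope-$s$ loop, since the rational tangle is determined by which slope bounds its meridian disk. Second, the group of isotopy classes of homeomorphisms of $\Conways$ acts on the set of slopes $\QQQ$ through $\mathrm{PGL}(2,\ZZ)$ by linear fractional transformations: the linear maps in $GL(2,\ZZ)$ normalize $H$, hence descend to homeomorphisms of $\Conways$, they realize all of $\mathrm{PGL}(2,\ZZ)$ on $\QQQ$, and such a map preserves the orientation of $\Conway$ exactly when its determinant is $+1$. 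Since $(S^3,K(r))$ is the union of $\rtangle{\infty}$ and $\rtangle{r}$ glued along $\Conways$, a homeomorphism of pairs preserving the Conway sphere is precisely one induced by an $A\in\mathrm{PGL}(2,\ZZ)$ carrying the unordered pair of tangle slopes $\{\infty,q/p\}$ to $\{\infty,q'/p'\}$.

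For the ``if'' direction, together with the refinement in the \emph{Moreover} clause, I would realize the required $A$ explicitly. Assume (1) and (2), so in particular $p=p'$. If $q\equiv\pm q'\pmod p$, the affine map $r\mapsto \pm r+n$ with $np=q'\mp q$, i.e.\ $A=\bigl(\begin{smallmatrix} \pm1 & n\\ 0 & 1\end{smallmatrix}\bigr)$, fixes $\infty$ and sends $q/p$ to $q'/p'$; it preserves the upper tangle and the two sides of the Conway sphere, so the orientation behaviour on $S^3$ agrees with $\det A=\pm1$. If instead $qq'\equiv\pm1\pmod p$, I would take $A=\bigl(\begin{smallmatrix} q' & b\\ p & -q\end{smallmatrix}\bigr)$ with $b$ chosen so that $\det A=-qq'-bp=\mp1$, which is possible precisely because $qq'\equiv\pm1\pmod p$; then $A(\infty)=q'/p'$ and $A(q/p)=\infty$, so the induced homeomorphism exchanges the upper and lower tangles. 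The one delicate point is the bookkeeping of orientations: exchanging the two tangles reverses the transverse orientation of the Conway sphere, so an orientation-preserving homeomorphism of $S^3$ corresponds to an $A$ with $\det A=-1$. This extra sign is exactly why $qq'\equiv 1$ (not $-1$) yields an orientation-preserving $f$, matching the stated rule. Extending $A$ over both tangles assembles the homeomorphism with the prescribed orientation and tangle-correspondence, which is the full \emph{Moreover} clause.

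The substantive direction is necessity. Here I would show that any homeomorphism of pairs $f\co (S^3,K(q/p))\to(S^3,K(q'/p'))$ can be isotoped so as to carry the Conway sphere of the source to that of the target. This rests on the uniqueness up to isotopy of the $2$--bridge (Conway) sphere realizing the $2$--bridge structure, equivalently the uniqueness of the bridge decomposition of a $2$--bridge link. Granting it, $f$ may be arranged to restrict to a homeomorphism of Conway spheres sending $\{\alpha_\infty,\alpha_{q/p}\}$ to $\{\alpha_\infty,\alpha_{q'/p'}\}$, hence to act on slopes by an $A\in\mathrm{PGL}(2,\ZZ)$ with $A\{\infty,q/p\}=\{\infty,q'/p'\}$. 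Examining the two cases $A(\infty)=\infty$ and $A(\infty)=q'/p'$ and writing out the integer entries of $A$ forces $p=p'$ together with $q\equiv\pm q'\pmod p$ in the first case and $qq'\equiv\pm1\pmod p$ in the second, which are exactly conditions (1) and (2).

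I expect the uniqueness of the Conway sphere to be the main obstacle, as it is the only step needing genuine $3$--manifold topology rather than the arithmetic of $\mathrm{PGL}(2,\ZZ)$. I would establish it either directly, by an incompressibility and innermost-disk analysis of how an arbitrary splitting sphere meets the standard one, or by passing to the double branched cover: $\Sigma_2(K(q/p))=L(p,q)$, a homeomorphism of links lifts to one of lens spaces, and the classification $L(p,q)\cong L(p',q')$ if and only if $p=p'$ and $q'\equiv\pm q^{\pm1}\pmod p$ yields (1) and (2) at once. The double-cover route proves the equivalence cleanly but loses track of the tangle-level refinement, so for the \emph{Moreover} part I would still rely on the explicit $\mathrm{PGL}(2,\ZZ)$ construction of the ``if'' direction.
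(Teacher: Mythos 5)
The paper does not actually prove this theorem: it is recalled as Schubert's classical result, with citations to Schubert and to Burde--Zieschang, so there is no internal proof to compare against. Judged on its own merits, your proposal is a sound reconstruction of the standard argument, and it is organized the right way. The constructive half is complete and correct: the matrices $\bigl(\begin{smallmatrix} \pm1 & n\\ 0 & 1\end{smallmatrix}\bigr)$ and $\bigl(\begin{smallmatrix} q' & b\\ p & -q\end{smallmatrix}\bigr)$ do realize the two cases, and your sign bookkeeping --- that a side-swapping homeomorphism of $S^3$ preserves orientation exactly when its restriction to the Conway sphere reverses it, so $qq'\equiv 1\pmod p$ corresponds to $\det A=-1$ and an orientation-preserving $f$ --- reproduces precisely the orientation and tangle-correspondence statements of the \emph{Moreover} clause. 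You also correctly isolate where all the genuine $3$--dimensional difficulty lives: the necessity direction requires knowing that the bridge sphere of a $2$--bridge link is unique up to isotopy of pairs, and this uniqueness (Schubert's actual contribution) is the one step you defer rather than prove. Your fallback via double branched covers is legitimate for conditions (1) and (2): $\Sigma_2(K(q/p))=L(p,q)$, a pair homeomorphism lifts, and the homeomorphism classification of lens spaces gives $p=p'$ and $q'\equiv\pm q^{\pm1}\pmod p$; but note that this route itself rests on a nontrivial classical input (Reidemeister torsion, or equivalently the uniqueness of Heegaard tori in lens spaces, which is the same kind of statement as bridge-sphere uniqueness), and, as you acknowledge, it does not recover the tangle-level refinement, for which your explicit $\mathrm{PGL}(2,\ZZ)$ construction remains necessary. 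So the proposal is correct as a proof outline, with the understood caveat that its deferred lemma is not a routine detail but the historical core of the theorem.
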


By van Kampen's theorem,
the link group
$G(K(r))=\pi_1(S^3-K(r))$
of $K(r)$ is identified with
$\pi_1(\PConway)/
\llangle\alpha_{\infty},\alpha_r\rrangle$.
We call the image in the link group
of the meridian pair
of the fundamental group $\pi_1(B^3-t(\infty))$
(resp.\ $\pi_1(B^3-t(r))$
the \textit{upper meridian pair}
(resp.\ \textit{lower meridian pair}).
The link group is regarded as the quotient
of the rank 2 free group,
$\pi_1(B^3-t(\infty))\cong\pi_1(\PConway)/
\llangle\alpha_{\infty}\rrangle$,
by the normal closure of $\alpha_{r}$.
This gives a one-relator presentation of the
link group, and is actually equivalent to the
upper presentation \cite{Crowell-Fox}.
Similarly,
the link group is regarded as the quotient
of the rank 2 free group
$\pi_1(B^3-t(r))\cong\pi_1(\PConway)/
\llangle\alpha_{r}\rrangle$
by the normal closure of $\alpha_{\infty}$,
which in turn
gives the
lower presentation of the link group.
These facts play an important role in the
next section.

\section[Constructing an epimorphism from G(K(tilde r)) to G(K(r))]{Constructing an epimorphism $G(K(\tilde{r})) \to G(K(r))$}
\label{Sec:algebraic proof}

In this section, we prove \fullref{Thm:epimorphism1},
which states the existence of an epimorphism $G(K(\tilde{r})) \to 
G(K(r))$.
Before proving the theorem,
we explain special cases of
the theorem for some simple values of $r$.

\begin{Example}
\label{ex.r=infty}\rm
If $r=\infty$, then $K(r)$ is a trivial $2$--component link.
Further, $\RGPP{r}=\RGP{r}=\RGP{\infty}$.
Thus the region bounded by the edges $\langle\infty,0\rangle$
and $\langle\infty,1\rangle$ is a fundamental domain
for the action of $\RGPP{r}$ on $\HH^2$.
Hence, the assumption of \fullref{Thm:epimorphism1}
is satisfied if and only if $\tilde r=\infty$.
This reflects the fact that
a link is trivial if and only if its link group is a free group.
\end{Example}

\begin{Example}
\label{ex.r=integer}\rm
If $r\in\ZZ$, then $K(r)$ is a trivial knot.
Further, $\RGPP{r}$ is equal to the group
generated by the reflections in the edges of any of $\DD$.
In particular, any ideal triangle of $\DD$
is a fundamental domain for the action of $\RGPP{r}$ on $\HH^2$.
Hence, $\RGPP{r}$ acts transitively on $\QQQ$
and every $\tilde r\in\QQQ$ satisfies the assumption
of \fullref{Thm:epimorphism1}.
This reflects the fact that
there is an epimorphism from the link group
of an arbitrary link $L$
to $\ZZ$, the knot group of the trivial knot,
sending meridians to meridians.
\end{Example}

\begin{Example}
\label{ex.r=half_integer}\rm
If $r\equiv 1/2 \pmod \ZZ$, then $K(r)$ is a Hopf link.
Further, $\tilde r=q/p$ satisfies the assumption
of \fullref{Thm:epimorphism1}
if and only if $p$ is even, ie,
$K(\tilde r)$ is a $2$--component link.
This reflects the fact that
the link group of an arbitrary $2$--component link
has an epimorphism to the link group, $\ZZ\oplus\ZZ$, of the Hopf link.
\end{Example}

The proof of \fullref{Thm:epimorphism1}
is based on the following simple observation.
\eject
\begin{Lemma}
\label{Lem:Komori-Series}
For each rational tangle $\rtangle{r}$,
the following holds.
\begin{enumerate}
\item
For each $s\in\QQQ$,
the simple loop $\alpha_s$ is nullhomotopic
in $B^3-t(r)$
if and only if $s=r$.
\item
Let $s$ and $s'$ be elements of $\QQQ$
which belongs to the same $\RGP{r}$--orbit.
The the simple loops
$\alpha_s$ and $\alpha_{s'}$ are
homotopic in
$B^3-t(r)$.
\end{enumerate}
\end{Lemma}

\begin{proof}
The linear action of $SL(2,\ZZ)$ on
$\RR^2$ descends to an action on
$\Conways$, and the assertions in this lemma
are invariant by the action.
Thus we may assume $r=\infty$.

(1)\qua Let $\gamma_1$ and $\gamma_2$ be arcs
in $\partial \rtangle{r}$ of slope $\infty$,
namely $(\gamma_1\cup \gamma_2)\cap \partial t(\infty)=\partial t(\infty)$
and $\gamma_1\cup \gamma_2$ is parallel to $t(\infty)$
in $B^3$.
Then $\pi_1(B^3-t(\infty))$
is the free group of rank $2$ generated by
the meridian pair $\{\mu_1,\mu_2\}$,
and the cyclic word in $\{\mu_1,\mu_2\}$
obtained by reading the intersection
of the loop $\alpha_s$ with $\gamma_1\cup \gamma_2$
represent the free homotopy class of $\alpha_s$.
(After a suitable choice of orientation,
a positive intersection with $\gamma_i$
corresponds to $\mu_i$.
If $s\ne 0$, then $\alpha_s$ intersects 
$\gamma_1$ and $\gamma_2$ alternatively,
and hence the corresponding word is a reduce word.
Thus $\alpha_s$ is not nullhomotopic
in $B^3-t(r)$ if $s\ne \infty$.
Since the converse is obvious,
we obtain the desired result.

(2)\qua Let $A$ be the reflection of $\DD$
in the edge $\langle 0,\infty\rangle$,
and let $B$ be the
parabolic transformation of $\DD$ around the
vertex $\infty$ by $2$ units.
Then their actions on $\QQQ$ is given by
$A(s)=-s$ and $B(s)=s+2$.
Since $A$ and $B$ generates the
group $\RGP{\infty}$,
we have only to show that
the simple loop $\alpha_s$ on $\partial B^3-t(\infty)$
is homotopic to the simple loops of
slopes $-s$ and $s+2$
in $B^3-t(\infty)$

We first show that $\alpha_s$ is homotopic to
$\alpha_{-s}$ in $B^3-t(\infty)$.
Let $\RX$ be the orientation-reversing involution of
$\Conways$ induced by the reflection
$(x,y)\mapsto (x,-y+1)$ on $\RR^2$.
The fixed point set of $\RX$ is the simple loop of
slope $0$ which is obtained as the image of
the line $\RR\times\{1/2\}$.
The quotient space $\PConway/\RX$ is
homeomorphic to a twice punctured disk,
which we denote by $R$.
The projection $\PConway\to R$
extends to a continuous map
$B^3-t(\infty)\to R$,
which is a homotopy equivalence.
Then the two loops $\alpha_s$ and $\alpha_{-s}$
project to the same loop in $R$
and hence they must be homotopic in
$B^3-t(\infty)$.

Next, we show that
show that $\alpha_s$ is homotopic to
$\alpha_{s+2}$ in $B^3-t(\infty)$.
To this end, consider the Dehn twist
of $B^3-t(\infty)$ along the \lq\lq meridian disk'',
ie, the disk in $B^3-t(\infty)$
bounded by the simple loop $\alpha_{\infty}$.
Then it is homotopic to the identity map,
and maps $\alpha_s$ to $\alpha_{s+2}$.
Hence $\alpha_s$ is homotopic to
$\alpha_{s+2}$ in $B^3-t(\infty)$.
\end{proof}

\begin{Remark}\rm
\fullref{Lem:Komori-Series} is nothing other than
a reformulation of (a part of)
Theorem 1.2 of Komori and Series
\cite{Komori-Series},
which in turn is a correction
of Remark 2.5
of Keen--Series \cite{Keen-Series}.
However, we presented a topological proof,
for the sake of completeness.
Their theorem actually implies that the converse to
the second assertion of the lemma holds.
Namely, two simple loops
$\alpha_s$ and $\alpha_{s'}$ are
homotopic in
$B^3-t(r)$ if and only if they belong to the same
orbit of $\RGP{r}$.
This is also proved by using the fact that
$\pi_1(B^3-t(r))$ is the free group of rank $2$
generated by the meridian pair.
\end{Remark}

\fullref{Lem:Komori-Series} implies the following consequence
for $2$--bridge knots.

\begin{Proposition}\label{Prop:Knotgroup}
For every $2$--bridge knot $K(r)$, the following holds.
If two elements $s$ and $s'$ of $\QQQ$
lie in the same $\RGPP{r}$--orbit,
then $\alpha_s$ and $\alpha_{s'}$ are homotopic in
$S^3-K(r)$.
\end{Proposition}

\begin{proof}
Since $\RGPP{r}$ is generated by
$\RGP{\infty}$ and $\RGP{r}$,
we have only to show the assertion
when $s'=A(s)$ for some $A$ in
$\RGP{\infty}$ or $\RGP{r}$.
If $A\in \RGP{\infty}$,
then $\alpha_s$ and $\alpha_{s'}$ are homotopic in
$B^3-t(\infty)$ by \fullref{Lem:Komori-Series}.
Since $G(K(r))$ is a quotient of
$\pi_1(B^3-t(\infty))$, this implies that
$\alpha_s$ and $\alpha_{s'}$ are homotopic in
$S^3-K(r)$.
Similarly, if
$A\in \RGP{r}$,
then $\alpha_s$ and $\alpha_{s'}$ are homotopic in
$B^3-t(r)$ by \fullref{Lem:Komori-Series}.
Since $G(K(r))$ is a quotient of
$\pi_1(B^3-t(r))$, this also implies that
$\alpha_s$ and $\alpha_{s'}$ are homotopic in
$S^3-K(r)$.
This completes the proof of the proposition.
\end{proof}

\begin{Corollary}\label{Cor:nullhomotopic}
If $s$ belongs to the orbit of $\infty$ or $r$
by $\RGPP{r}$,
then $\alpha_s$ is nullhomotopic in $S^3-K(r)$.
\end{Corollary}

\begin{proof}
The loops $\alpha_{\infty}$ and $\alpha_r$ are
nullhomotopic in $B^3-t(\infty)$ and $B^3-t(r)$,
respectively.
Hence both of them are nullhomotopic in $S^3-K(r)$.
Thus we obtain the corollary by
\fullref{Prop:Knotgroup}.
\end{proof}

We shall discuss more about the corollary in \fullref{sec:questions}.

\begin{proof}
[Proof of \fullref{Thm:epimorphism1}]
Suppose $\tilde r$ belongs to the
orbit of $r$ or $\infty$ by $\RGPP{r}$.
Then $\alpha_{\tilde r}$
is nullhomotopic in
$G(K(r))=\pi_1(\PConway)/
\llangle\alpha_{\infty},\alpha_r\rrangle$.
Thus the normal closure
$\llangle\alpha_{\infty},\alpha_{\tilde r}\rrangle$
in $\pi_1(\PConway)$
is contained in
$\llangle\alpha_{\infty},\alpha_r\rrangle$.
Hence the identity map on $\pi_1(\PConway)$
induces an epimorphism
from
$G(K(\tilde r))=\pi_1(\PConway)/
\llangle\alpha_{\infty},\alpha_{\tilde r}\rrangle$
to
$G(K(r))=\pi_1(\PConway)/
\llangle\alpha_{\infty},\alpha_r\rrangle$.
It is obvious that the epimorphism
sends the upper meridian pair of
$G(K(\tilde r))$
to that of
$G(K(r))$.
This completes the proof of
\fullref{Thm:epimorphism1}.
\end{proof}

\section[Continued fraction expansion of tilde r in hat(Gamma)-r-orbits]{Continued fraction expansion of $\tilde{r}$ in 
$\hgr$--orbits}
\label{sec.tr_Ktr}

In this section,
we explain what $\tilde{r}$ and $K(\tilde{r})$ look like
when $\tilde{r}$ belongs to the $\hgr$--orbit of $r$ or $\infty$,
in \fullref{Prop:continued fraction} and \fullref{prop.hGr_Ktr}.
These propositions are substantially equivalent.

For the continued fraction expansion
$r=[a_1,a_2, \cdots,a_{m}]$,
let $\cfr$, $\cfr^{-1}$, $\epsilon\cfr$
and $\epsilon\cfr^{-1}$,
with $\epsilon\in\{-,+\}$, be the finite sequences
defined as follows:
\begin{align*}
\cfr &= (a_1, a_2,\cdots, a_m), \quad &
\cfr^{-1} &=(a_m,a_{m-1},\cdots,a_1),\\
\epsilon\cfr &=(\epsilon a_1,\epsilon a_2,\cdots,
\epsilon a_m), \quad &
\epsilon \cfr^{-1} &=(\epsilon a_m,\epsilon
a_{m-1},\cdots,
\epsilon a_1).
\end{align*}
Then we have the following proposition,
which is proved in \fullref{Sec:Continued fraction}.

\begin{Proposition}
\label{Prop:continued fraction}
Let $r$ be as above.
Then a rational number $\tilde r$
belongs to the orbit
of $r$ or $\infty$ by $\RGPP{r}$
if and only if
$\tilde r$ has the following continued
fraction expansion:
\[
\tilde r=
2c+[\epsilon_1\cfr,2c_1,\epsilon_2\cfr^{-1},2c_2,\epsilon_3\cfr,
\cdots, 2c_{n-1},\epsilon_n \cfr^{(-1)^{n-1}}]
\]
for some positive integer $n$,
$c\in\ZZ$,
$(\epsilon_1,\epsilon_2,\cdots,\epsilon_n)
\in \{-,+\}^n$
and
$(c_1,c_2,\cdots,c_{n-1})\in\ZZ^{n-1}$.
\end{Proposition}

The following proposition is a variation of \fullref{Prop:continued
fraction}, written in topological words,
which is proved in \fullref{sec.present_Ktr}.

\begin{Proposition}
\label{prop.hGr_Ktr}
We present the $2$--bridge link $K(r)$ by the plat closure
$$
\raisebox{12pt}[0pt][0pt]{$K(r) = \, $}
\begin{picture}(100,35)
\put(0,10){\pc{s11}{0.4}}
\put(35,12){\small $b$}
\end{picture}
$$
for some $4$--braid $b$.
Then $\tilde{r}$ belongs to the $\hgr$--orbit of $\infty$ or $r$
if and only if
$K(\tilde{r})$ is presented by
$$
\raisebox{12pt}[0pt][0pt]{$K(\tilde{r}) = \, $}
\begin{picture}(330,35)
\put(0,10){\pc{s10}{0.4}}
\put(33,13){\small$b_\pm$}
\put(68,13){\small $2c_1$}
\put(103,13){\small$b_\pm^{-1}$}
\put(140,13){\small $2c_2$}
\put(178,13){\small$b_\pm$}
\put(213,13){\small $2c_3$}
\put(292,13){\small$\smash{b_\pm^{\pm1}}$}
\end{picture}
$$
for some signs of $b_\pm$ and $\smash{b_\pm^{-1}}$ and for some integers $c_i$,
where
a boxed ``$2c_i$'' implies $2c_i$ half-twists,
and $\smash{b_\pm^{\pm1}}$ are the braids obtained from $b$ by mirror images
as shown in the forthcoming \fullref{thm.K_tildeK}.
\end{Proposition}

\subsection[Continued fraction expansions of tilde r and r]{Continued fraction expansions of $\tilde{r}$ and $r$}
\label{Sec:Continued fraction}

In this section,
we prove \fullref{Prop:continued fraction}.
The proof is based on
the correspondence between
the modular tessellation and continued fraction expansions
(see Hatcher--Thurston \cite[p 229 Remark]{Hatcher-Thurston} for this correspondence).

We first recall the correspondence between
continued fraction expansions and edge-paths
in the modular diagram $\DD$.
For the continued fraction
expansion $r=[a_1,a_2,\cdots,a_m]$,
set $r_{-1}=\infty$, $r_0=0$
and $r_j=[a_1,a_2,\cdots, a_j]$ ($1\le j\le m$).
Then $(r_{-1},r_0,r_1,\cdots,r_m)$
determines an edge-path in $\DD$,
ie, $\langle r_j, r_{j+1}\rangle$ is
an edge of $\DD$ for each $j$ ($-1\le j\le m-1$).
Moreover, each component $a_j$ of the continued fraction
is read from the edge-path by the following rule:
The vertex $r_{j+1}$ is the image of $r_{j-1}$
by the parabolic transformation of $\DD$,
centered on the vertex $r_j$,
by $(-1)^j a_j$ units in the
clockwise direction.
(Thus the transformation is conjugate to
$$\begin{pmatrix} 1&(-1)^{j-1} a_j \\ 0&1 \end{pmatrix}$$
in $PSL(2,\ZZ)$.)
See \fullref{fig.cf}.

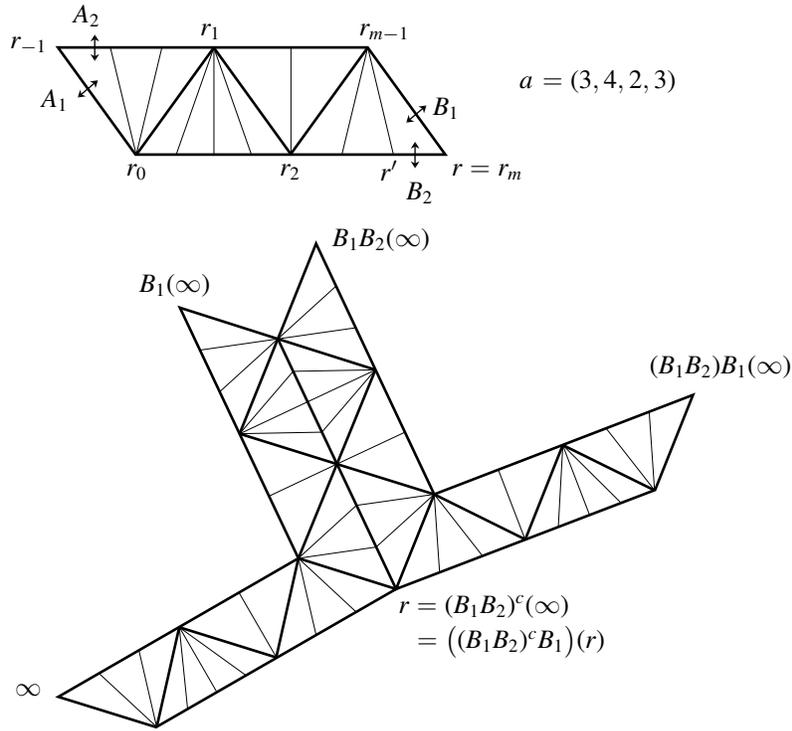
\begin{figure}[ht!]
\begin{center}
\begin{picture}(270,260)
\put(0,220){\pc{f6}{0.5}}
\put(-12,245){\fns $r_{-1}$}
\put(12,255){\fns $A_2$}
\put(0,223){\fns $A_1$}
\put(60,250){\fns $r_1$}
\put(120,250){\fns $r_{m-1}$}
\put(180,230){\fns $\cfr = (3,4,2,3)$}
\put(148,220){\fns $B_1$}
\put(32,197){\fns $r_0$}
\put(90,197){\fns $r_2$}
\put(128,195){\fns $r'$}
\put(138,188){\fns $B_2$}
\put(155,197){\fns $r = r_m$}
\put(0,75){\pc{f7}{0.5}}
\put(-10,0){\fns $\infty$}
\put(37,153){\fns $B_1 ( \infty )$}
\put(110,170){\fns $B_1 B_2 ( \infty )$}
\put(230,122){\fns $(B_1 B_2) B_1 ( \infty )$}
\put(135,18){\shortstack[l]{\fns $r = (B_1 B_2)^c ( \infty )$
\\ \fns \hspace*{4.5pt} $= \big( (B_1 B_2)^c B_1 \big)(r)$}}
\end{picture}
\end{center}
\caption{\label{fig.cf}
Continued fractions}
\end{figure}

Conversely, any edge-path
$(s_{-1},s_0,s_1,\cdots,s_m)$ in $\DD$
with $s_{-1}=\infty$ and $s_0=0$
gives rise to a continued fraction expansion
$[b_1,b_2,\cdots, b_m]$ of the terminal vertex $s_m$,
where $b_j$ is determined by the rule
explained in the above.
If we drop the condition $s_0=0$,
then $s_0\in\ZZ$ and
the edge-path determines the continued fraction expansion
of the terminal vertex $s_m$
of the form $s_0+[b_1,b_2,\cdots, b_{m}]$.

Now recall the fundamental domain
for $\RGPP{r}$
described in the introduction.
It is bounded by the four edges
$\langle \infty, 0\rangle$,
$\langle \infty, 1\rangle$,
$\langle r, r_{m-1}\rangle$
and
$\langle r, r'\rangle$,
where
\[
r_{m-1}=[a_1,a_2,\cdots,a_{m-1}]
\quad
\mbox{and}
\quad
r'=[a_1,a_2,\cdots,a_{m-1},a_m-1].
\]
Let $A_1$, $A_2$, $B_1$ and $B_2$, respectively,
be the reflections in these edges.
Then
\begin{align*}
\RGP{\infty}&=
\langle A_1\svert A_1^2=1\rangle*
\langle A_2\svert A_2^2=1\rangle,\\
\RGP{r}&=
\langle B_1\svert B_1^2=1\rangle*
\langle B_2\svert B_2^2=1\rangle.
\end{align*}
The product $A_1A_2$
is the parabolic transformation of $\DD$,
centered on the vertex $\infty$,
by $2$ units in the clockwise direction,
and it generates the
normal infinite cyclic subgroup of $\RGP{\infty}$
of index $2$.
Similarly, the product
$B_1B_2$
is the parabolic transformation of $\DD$,
centered on the vertex $\infty$,
by $2$ or $-2$ units in the clockwise
direction
according as $m$ is even or odd,
and it generates the
normal infinite cyclic subgroup of $\RGP{r}$
of index $2$.

Pick a nontrivial element, $W$, of
$\RGPP{r} =\RGP{\infty}*\RGP{r}$.
Then it
is expressed uniquely as a reduced word
$W_1W_2\cdots W_n$
or $W_0W_1\cdots W_n$
where $W_j$ is a nontrivial element of
the infinite dihedral group
$\RGP{\infty}$ or $\RGP{r}$
according as $j$ is even or odd.
When $W=W_1W_2\cdots W_n$,
we regard $W=W_0W_1\cdots W_n$ with $W_0=1$.

Set $\eta_j=+1$ or $-1$ according as
$W_j$ is orientation-preserving or reversing.
Then there is a unique integer $c_j$
such that:
\begin{enumerate}
\item
If $j$ is even, then
$W_j=(A_1A_2)^{c_j}$ or $(A_1A_2)^{c_j}A_1$
according as $\eta_j=+1$ or $-1$.
\item
If $j$ is odd, then
$W_j=(B_1B_2)^{c_j}$ or $(B_1B_2)^{c_j}B_1$
according as $\eta_j=+1$ or $-1$.
\end{enumerate}

Now let $\tilde r$ be the image of $\infty$ or $r$
by $W$.
If $n$ is odd, then $W_n\in\RGP{r}$
and hence $W(r)=W_0W_1\cdots W_{n-1}(r)$.
Similarly, if $n$ is even,
then $W(\infty)=W_0W_1\cdots W_{n-1}(\infty)$.
So, we may assume
$\tilde r =W(\infty)$ or $W(r)$ according as
$n$ is odd or even.

\begin{Lemma}
\label{Lemma:continued fraction}
Under the above setting,
$\tilde r$ has the following continued fraction expansion.
\[
\tilde r=
-2c_0+[\epsilon_1\cfr,2\epsilon_1c_1,
\epsilon_2\cfr^{-1},2\epsilon_2c_2,
\cdots,
2\epsilon_{n}c_n,
\epsilon_{n+1}\cfr^{(-1)^{n}}],
\]
where $\epsilon_j=\eta_0(-\eta_1)\cdots(-\eta_{j-1})$.
\end{Lemma}

\begin{proof}
First,
we treat the case when $W_0=1$.
Recall that $r$ is joined to $\infty$
by the edge-path
$(r_{-1},r_0,\cdots,r_{m-1},r_m)$.
Since $W_1$ fixes the point $r=r_m$,
we can join the above edge-path 
with its image by $W_1$,
and obtain the edge path
\[
(r_{-1},r_0,\cdots,r_{m-1},r_m,
W_1(r_{m-1}),\cdots, W_1(r_0), W_1(r_{-1})).
\]
This joins $\infty$ and $W_1(r_{-1})=W_1(\infty)$.
By applying the correspondence between the edge-paths
and the continued fractions,
we see that the rational number $W_1(\infty)$
has the continued fraction expansion
$[\cfr,2c_1,
-\eta_1\cfr^{-1}]$.
This can be confirmed by noticing the following facts
(see \fullref{fig.cf}).
\begin{enumerate}
\item
$W_1(r_{m-1})$ is the image of
$r_{m-1}$ by the parabolic transformation of $\DD$,
centered on the vertex $r_m=W_1(r_m)$,
by $(-1)^m2c_1$ units in the clockwise direction.
\item
$W_1(r_{j-1})$ is the image of $W_1(r_{j-1})$
by the parabolic transformation of $\DD$,
centered on the vertex $W_1(r_j)$,
by $(-1)^{j-1} a_j$ or $(-1)^{j} a_j$
units in the clockwise direction
according as $W_1$ is orientation-preserving
or reversing.
\end{enumerate}
By the temporary assumption $W_0=1$,
we have $\epsilon_1=\eta_0=+1$
and $\epsilon_2=\eta_0(-\eta_1)=-\eta_1$.
This proves the lemma when $n=1$.

Suppose $n\ge 2$.
Then,
since $W_1W_2(r_{-1})=W_1(r_{-1})$,
we can join the image of the original edge-path
by $W_1W_2$
to the above edge-path,
and obtain an edge-path
which joins $\infty$ to $W_1W_2(r)$.
More generally, by joining the images
of the original edge-path
by $1, W_1, W_1W_2,\cdots$, $W_1W_2\cdots W_n$,
we obtain an edge-path which joins $\infty$ to
$\tilde r$.
By using this edge path we obtain the lemma
for the case $W_0=1$.

Finally, we treat the case when $W_0\ne 1$.
In this case, we consider the edge-path
obtained as the image of the above edge-path by $W_0$.
Since $W_0(\infty)=\infty$,
this path joins $\infty$ to $\tilde r$
and the vertex next to $\infty$ is equal to the integer
$-2c_0$.
Hence we obtain the full assertion of the lemma.
\end{proof}

\begin{proof}
[Proof of \fullref{Prop:continued fraction}]
Immediate from
\fullref{Lemma:continued fraction}.
\end{proof}

\subsection[Presentation of K(tilde r)]{Presentation of $K(\tilde{r})$}
\label{sec.present_Ktr}

In this section,
we give a proof of \fullref{prop.hGr_Ktr}.
It is a substantially equivalent proof
to the proof of \fullref{Prop:continued fraction}
in \fullref{Sec:Continued fraction},
but written in other words
from the viewpoint of the correspondence between
$SL(2,\ZZ)$ and plat closures of $4$--braids
(see Burde--Zieschang \cite[Section 12.A]{Burde-Zieschang} for this correspondence).

In the proof of \fullref{prop.hGr_Ktr},
we use automorphisms of the modular tessellation $\DD$.
Let ${\rm Aut}(\DD)$ denote
the group of automorphisms of $\DD$,
and let ${\rm Aut}^+(\DD)$ denote
its subgroup consisting the orientation-preserving automorphisms.
Then
\begin{align*}
& {\rm Aut}^+(\DD) = PSL(2,\ZZ),
\\
& {\rm Aut}(\DD) =
\Big\{ A \in GL(2,\ZZ) \  \Big| \ {\rm det}(A) = \pm 1 \Big\} \Big/
\Big\{ \pm \mbox{\small $\begin{pmatrix} 1 & 0 \\ 0 & 1 \end{pmatrix}$} 
\Big\}.
\end{align*}

\begin{proof}[Proof of \fullref{prop.hGr_Ktr}]
We give plat presentations of $K(r)$ and $K(\tilde{r})$,
and show that they satisfy the proposition.

First, we give a plat presentation of $K(r)$, as follows.
By \fullref{Thm:Schubert},
we may assume that
$r = {\rm odd}/{\rm even}$ or ${\rm even}/{\rm odd}$.
Then
we can choose a continued fraction expansion of $r$ with even entries,
ie, of the form $[2a_1,2a_2,\cdots,2a_m]$.
Then $m$ is odd if $r = {\rm odd}/{\rm even}$,
and $m$ is even if $r={\rm even}/{\rm odd}$.
In the latter case, we replace the continued fraction expansion with
$[2a_1,\cdots,2a_{m-1}, 2a_m-1,1]$, and
set $[a'_1,a'_2,\cdots,a'_n]$ to be this continued fraction.
Namely, $[a'_1,a'_2,\cdots,a'_n]$ is
$$
\begin{cases}
[2a_1,2a_2,\cdots,2a_m]
& \mbox{ if $m$ is odd (ie, if $r={\rm odd}/{\rm even}$),} \\
[2a_1,\cdots,2a_{m-1},2a_m-1,1]
& \mbox{ if $m$ is even (ie, if $r={\rm even}/{\rm odd}$).}
\end{cases}
$$
Then we have a presentation
$$
r = B \cdot \infty ,
\mbox{ where }
B =
\mbox{\small $\begin{pmatrix} 1 & 0 \\ a'_1 & 1 \end{pmatrix}$}
\mbox{\small $\begin{pmatrix} 1 & a'_2 \\ 0 & 1 \end{pmatrix}$}
\mbox{\small $\begin{pmatrix} 1 & 0 \\ a'_3 & 1 \end{pmatrix}$} \cdots
\mbox{\small $\begin{pmatrix} 1 & 0 \\ a'_n & 1 \end{pmatrix}$} ,
$$
recalling that $B$ acts on $\QQ \cup \{ \infty \}$
by the linear fractional transformation.
Further,
the $2$--bridge link $K(r)$ is given by the plat closure
of the braid $b$ corresponding to the matrix $B$,
$$
\raisebox{12pt}[0pt][0pt]{$K(r) = \, $}
\begin{picture}(80,25)
\put(0,10){\pc{s11}{0.4}}
\put(35,12){\small$b$}
\end{picture}
\qquad
\raisebox{12pt}[0pt][0pt]{$b = \, $}
\begin{picture}(185,35)
\put(0,10){\pc{s12a}{0.4}}
\put(20,6){\small $a'_1$}
\put(48,15){\small $-a'_2$}
\put(85,6){\small $a'_3$}
\put(148,6){\small $a'_n$}
\end{picture} 
$$
where a boxed ``$a'_i$'' implies $a'_i$ half-twists.

Next, we give a plat presentation of $K(\tilde{r})$.
Since $B \in {\rm Aut}(\DD)$,
$\Gamma_r$ is presented by
$$
\Gamma_r = B \, \Gamma_\infty B^{-1},
\ \mbox{ where } \
\Gamma_\infty =
\Big\{
\mbox{\small $\begin{pmatrix} 1 & {\rm even} \\ 0 & \pm1 \end{pmatrix}$}
\Big\}
\ \subset \ {\rm Aut}(\DD) .
$$
By definition, $\tilde{r}$ belongs to the orbit of $\infty$ or
$r = B \cdot \infty$
by the action of $\hgr$,
which is generated by $\Gamma_r$ and $\Gamma_\infty$.
Hence, $\tilde{r}$ is equal to the image of $\infty$
by one of the following automorphisms of $\DD$:
\begin{align*}
&B \, \mbox{\small $\begin{pmatrix} 1 & {\rm even} \\ 0 & \pm1 
\end{pmatrix}$}
B^{-1} \mbox{\small $\begin{pmatrix} 1 & {\rm even} \\ 0 & \pm1 
\end{pmatrix}$}
\cdots
B \, \mbox{\small $\begin{pmatrix} 1 & {\rm even} \\ 0 & \pm1 
\end{pmatrix}$}
B^{-1}, \\
&
\mbox{\small $\begin{pmatrix} 1 & {\rm even} \\ 0 & \pm1 \end{pmatrix}$}
B \, \mbox{\small $\begin{pmatrix} 1 & {\rm even} \\ 0 & \pm1 
\end{pmatrix}$}
B^{-1} \mbox{\small $\begin{pmatrix} 1 & {\rm even} \\ 0 & \pm1 
\end{pmatrix}$}
\cdots
B \, \mbox{\small $\begin{pmatrix} 1 & {\rm even} \\ 0 & \pm1 
\end{pmatrix}$}
B^{-1}, \\
&
B \, \mbox{\small $\begin{pmatrix} 1 & {\rm even} \\ 0 & \pm1 
\end{pmatrix}$}
B^{-1} \mbox{\small $\begin{pmatrix} 1 & {\rm even} \\ 0 & \pm1 
\end{pmatrix}$}
\cdots
B^{-1} \mbox{\small $\begin{pmatrix} 1 & {\rm even} \\ 0 & \pm1 
\end{pmatrix}$}
B, \\
&
\mbox{\small $\begin{pmatrix} 1 & {\rm even} \\ 0 & \pm1 \end{pmatrix}$}
B \, \mbox{\small $\begin{pmatrix} 1 & {\rm even} \\ 0 & \pm1 
\end{pmatrix}$}
B^{-1} \mbox{\small $\begin{pmatrix} 1 & {\rm even} \\ 0 & \pm1 
\end{pmatrix}$}
\cdots
B^{-1} \mbox{\small $\begin{pmatrix} 1 & {\rm even} \\ 0 & \pm1 
\end{pmatrix}$}
B.
\end{align*}
By using
\begin{align*}
B_+ &:= B , \\
B_- &:=
\mbox{\small $\begin{pmatrix} 1 & 0 \\ 0 & -1 \end{pmatrix}$}
B \, \mbox{\small $\begin{pmatrix} 1 & 0 \\ 0 & -1 \end{pmatrix}$}
=
\mbox{\small $\begin{pmatrix} 1 & 0 \\ -a'_1 & 1 \end{pmatrix}$}
\mbox{\small $\begin{pmatrix} 1 & -a'_2 \\ 0 & 1 \end{pmatrix}$} \cdots
\mbox{\small $\begin{pmatrix} 1 & 0 \\ -a'_n & 1 \end{pmatrix}$},
\end{align*}
the above elements have the following unified expression:
$$
\mbox{\small $\begin{pmatrix} 1 & {\rm even} \\ 0 & 1 \end{pmatrix}$} 
B_\pm
\mbox{\small $\begin{pmatrix} 1 & 2c_1 \\ 0 & 1 \end{pmatrix}$} 
B_\pm^{-1}
\mbox{\small $\begin{pmatrix} 1 & 2c_2 \\ 0 & 1 \end{pmatrix}$} B_\pm
\mbox{\small $\begin{pmatrix} 1 & 2c_3 \\ 0 & 1 \end{pmatrix}$} \cdots
B_\pm^{\pm1}.
$$
Hence
$K(\tilde{r})$ is given by the plat closure of its corresponding braid,
$$
\raisebox{12pt}[0pt][0pt]{$K(\tilde{r}) = \, $}
\begin{picture}(334,35)
\put(0,10){\pc{s10}{0.4}}
\put(33,13){\small$b_\pm$}
\put(68,13){\small $2c_1$}
\put(103,13){\small$b_\pm^{-1}$}
\put(140,13){\small $2c_2$}
\put(178,13){\small$b_\pm$}
\put(213,13){\small $2c_3$}
\put(292,13){\small$\smash{b_\pm^{\pm1}}$}
\end{picture} 
$$
where $b_-$ is the braid corresponding to $B_-$:
$$
\raisebox{12pt}[0pt][0pt]{$b_- = \, $}
\begin{picture}(190,35)
\put(0,8){\pc{s12a}{0.4}}
\put(15,4){\small $-a'_1$}
\put(52,13){\small $a'_2$}
\put(80,4){\small $-a'_3$}
\put(144,4){\small $-a'_n$}
\end{picture}  
$$
The difference between the presentations of
the required $K(\tilde{r})$ of \fullref{prop.hGr_Ktr}
and the above $K(\tilde{r})$ is that
$b_-$ of the required $K(\tilde{r})$
is the mirror image of $b$
with respect to
(the plane intersecting this paper orthogonally along)
the central horizontal line,
while $b_-$ of the above $K(\tilde{r})$
is the mirror image of $b$ with respect to this paper.
Indeed, they are different as braids,
but their plat closures are isotopic,
because both of them are isotopic to, say,
$$
\begin{picture}(320,50)
\put(0,15){\pc{s15a}{0.4}}
\put(31,30){\fns $-2a_{m-1}$}
\put(92,8){\fns $2a_{m}$}
\put(155,30){\fns $2c_1$}
\put(199,8){\fns $\pm2a_{m}$}
\put(260,30){\fns $\mp2a_{m-1}$}
\end{picture} 
$$
and we can move any full-twists
to the opposite side of the square pillar
by an isotopy of the plat closure.
Here we draw only a part of the braid in the above figure.
(See, for example,
Siebenmann \cite{Siebenmann}, Burde--Zieschang \cite[Figure 12.9(b)]{Burde-Zieschang}
or Kauffman--Lambropoulou \cite[Section 2]{Kauffman-Lambropoulou}
for an exposition of this flype move.)
Hence, the required $K(\tilde{r})$ is isotopic to the above 
$K(\tilde{r})$,
completing the proof of \fullref{prop.hGr_Ktr}.
\end{proof}

\section[Constructing a continuous map from (S3,K(tilde r)) to (S3,K(r))]{Constructing a continuous map
$(S^3,K(\tilde r))\to(S^3,K(r))$}
\label{sec:construction1}

In this section,
we prove \fullref{thm.K_tildeK} below.
As mentioned in the introduction,
we obtain \fullref{Thm:epimorphism2}
from \fullref{prop.hGr_Ktr} and \fullref{thm.K_tildeK}.

\begin{Theorem}
\label{thm.K_tildeK}
Let $K$ be a $2$--bridge link presented by
the plat closure of a $4$--braid $b$,
and let $\tilde{K}$ be a $2$--bridge link of the form
\begin{align*}
& \raisebox{12pt}[0pt][0pt]{$K = \, $}
\begin{picture}(100,35)
\put(0,10){\pc{s11}{0.4}}
\put(37,12){\small$b$}
\end{picture}
\\
& \raisebox{12pt}[0pt][0pt]{$\tilde{K} = \, $}
\begin{picture}(330,35)
\put(0,10){\pc{s10}{0.4}}
\put(33,13){\small$b_\pm$}
\put(68,13){\small $2c_1$}
\put(103,13){\small$b_\pm^{-1}$}
\put(140,13){\small $2c_2$}
\put(178,13){\small$b_\pm$}
\put(213,13){\small $2c_3$}
\put(292,13){\small$\smash{b_\pm^{\pm1}}$}
\end{picture}
\end{align*}
for some signs of $b_\pm$ and $b_\pm^{-1}$ and for some integers $c_i$,
where
a boxed ``$2c_i$'' implies $2c_i$ half-twists,
and $\smash{b_\pm^{\pm1}}$ are the braids obtained from $b$
by mirror images in the following fashion.
$$
\begin{picture}(190,110)
\put(0,63){\pc{s6}{0.35}}
\put(-50,65){\small$b = b_+ =$}
\put(95,63){\pc{s7}{0.35}}
\put(175,65){\small$= b_+^{-1}$}
\put(0,43){\line(1,0){190}}
\put(185,43){\vector(0,1){7}}
\put(185,43){\vector(0,-1){7}}
\put(195,41){\small \rm mirror image}
\put(85,0){\line(0,1){95}}
\put(85,90){\vector(1,0){10}}
\put(85,90){\vector(-1,0){10}}
\put(55,100){\small \rm mirror image}
\put(0,13){\pc{s8}{0.35}}
\put(-30,15){\small$b_- =$}
\put(95,13){\pc{s9}{0.35}}
\put(175,15){\small$= b_-^{-1}$}
\end{picture}
$$
Then
there is a proper branched fold map $f\co  (S^3,\tilde{K}) \to (S^3,K)$
which respects the bridge structures and
induces an epimorphism $G(\tilde{K}) \to G(K)$
\end{Theorem}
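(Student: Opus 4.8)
The plan is to build $f$ directly from the two plat pictures by an ``accordion folding'' of the long tangle presenting $\tilde K$ onto the short tangle presenting $K$, using the height functions on both copies of $S^3$. First I would fix a height function for $K$ so that the two cups sit in the top ball $B_+$, the two caps in the bottom ball $B_-$, and the braid $b$ occupies the product region $\Conway\times[0,1]$ between them, with every intermediate level sphere meeting $K$ transversally in four points. I would then read the plat picture of $\tilde K$ supplied by \fullref{prop.hGr_Ktr} as a stack, from top to bottom, of blocks $b_\pm,\ 2c_1,\ b_\pm^{-1},\ 2c_2,\ b_\pm,\dots$, capped exactly as for $K$, and arrange its height function so that each braid block occupies one product slab and each boxed $2c_i$ occupies a thin slab between consecutive \emph{fold spheres}.

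On a single braid block the map is a homeomorphism onto the $b$--slab of $K$ respecting the four marked points. The key point is that the four variants $b_+,b_-,b_+^{-1},b_-^{-1}$ are obtained from $b$ by the two commuting reflections of $S^3$ indicated in the statement (reflection in the plane of the paper and reflection in a horizontal level sphere), both of which preserve the product structure and the level spheres; composing the appropriate reflection with the identity slab map carries each block onto the single $b$--slab of $K$. In particular a block carrying $b_\pm^{-1}$, being the horizontal mirror of the block carrying $b_\pm$, is sent onto the same $b$--slab after folding in the level sphere separating the two blocks. Thus the fold surface of $f$ is exactly the disjoint union of these separating level spheres, and on either side of each of them the two neighbouring blocks are folded onto one copy of the $b$--slab by the local model $f(x_1,x_2,x_3)=(x_1^2,x_2,x_3)$.

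The twist slabs $2c_i$ carry the remaining data. A region of $2c_i$ half--twists is $c_i$ full twists, realised by a power of the Dehn twist along a meridian disk of the tangle, which I would absorb into a self--homeomorphism of the adjacent slab; by (the proof of) \fullref{Lem:Komori-Series} such a twist does not alter the relevant homotopy classes, so it is compatible with the block maps and lets me match framings across a fold. To reconcile the four marked points of $\tilde K$ on a fold sphere when the two adjacent blocks carry opposite orientations, I would introduce, in that level sphere and disjoint from $\tilde K$, a branch circle along which $f$ is modelled on $f(z,x_3)=(z^2,x_3)$ with $z=x_1+x_2\sqrt{-1}$; the $2\pi$ monodromy of $z\mapsto z^2$ supplies precisely the index--$2$ identification needed for the two sheets to glue, and it places the whole branch curve in level spheres, of index $2$, and disjoint from $\tilde K$, as required. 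Mapping $B_\pm$ of $\tilde K$ homeomorphically onto $B_\pm$ of $K$ and checking that the pieces agree on their common level spheres then produces a proper branched fold map respecting the bridge structures.

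Finally I would verify that $f$ induces the epimorphism of \fullref{Thm:epimorphism1}. Since $f$ preserves peripheral structure and carries the upper meridian pair to the upper meridian pair (the top ball is mapped homeomorphically), $f_*$ agrees with the map induced by the identity on $\pi_1(\PConway)$ modulo the defining relators, and it is surjective because the block maps already hit every generator of $G(K)$. The main obstacle is the \emph{local} analysis at the fold spheres: one must position the fold spheres and the branch circles so that near each of the four points of $\tilde K$ the map is genuinely given by one of the three branched--fold normal forms, so that $f^{-1}(K)=\tilde K$ holds exactly (no spurious preimage arising from folding a strand transverse to a fold sphere), and so that the twist data and the mirror types $b_\pm^{\pm1}$ of consecutive blocks are compatible around each branch circle. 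Making this bookkeeping close up for every admissible word of \fullref{prop.hGr_Ktr} is the technical heart of the argument.
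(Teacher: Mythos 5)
Your global ``accordion'' skeleton --- slab-by-slab block maps given by the two commuting reflections, turning regions between consecutive blocks, surjectivity read off from the upper meridian pair --- is exactly the skeleton of the paper's proof, and that part of your proposal is fine. The genuine gap is in what you yourself call the technical heart: the treatment of the $2c_i$--regions and the local geometry at the turns, and there your two mechanisms fail. First, the twists cannot be ``absorbed into a self-homeomorphism of the adjacent slab'' by a power of a Dehn twist along a meridian disk: a braid slab $(S^2\times I,\ \hbox{4 strands})$ contains no meridian disk (compressing disks exist only in the rational tangles, that is, in the two end balls), and \fullref{Lem:Komori-Series} is a statement about free homotopy of loops, which cannot be promoted to a homeomorphism of pairs that is standard on both boundary spheres --- indeed no such homeomorphism exists, since $b$ and $b$ followed by $2c_i$ half-twists are distinct in the relevant mapping class group, so absorbing the twists necessarily distorts the boundary marking and breaks the gluing with the next piece. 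What the paper does instead is to map each twist region (``$c$--domain'') not over the braid slab at all, but into an end ball of $(S^3,K)$: the $c$--domain $S^2\times[-1,1]$ is the $2$--fold branched cover of $B^3$ branched over a circle, via the involution $(x,t)\mapsto(\sigma(x),-t)$ with $\sigma$ a reflection of $S^2$, and the quotient map carries the four strands onto the two bridge arcs. The evenness of $2c_i$ is precisely what makes the four twisted strands invariant under this involution (a $\pi$--rotation about a circle in the central level sphere); your proposal never uses the evenness structurally, which is a symptom that the mechanism is wrong.

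Second, your fold/branch layout is internally inconsistent: you place the branch circles ``in that level sphere'', meaning in a fold sphere, but in a branched fold map the three local models are mutually exclusive, so the branch curve must be disjoint from the fold surface; a branch circle lying inside the fold surface would force the local model $f(x_1,z)=(x_1^2,z^2)$, which is exactly the ``fold branch point'' that the paper's definition excludes. Correspondingly, your claim that the fold surface is exactly the union of the level spheres separating consecutive blocks is also not right. In the paper's construction the two cases are kept apart: when the adjacent blocks are $b_\epsilon^{+1}$ and $b_{-\epsilon}^{-1}$ (related by a $\pi$--rotation), the turn is a genuine $2$--fold branched covering of the end ball, with a single branch circle in the central level sphere of the $c$--domain and no fold at all (\fullref{fig.1}); when the adjacent blocks are $b_\epsilon^{+1}$ and $b_\epsilon^{-1}$ (related by a mirror reflection), the turn is a fold along the central level sphere together with two branch circles lying in nearby level spheres, disjoint from the fold (\fullref{fig.2}). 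So turns without folds must occur, branch circles can never sit on fold spheres, and the bookkeeping you defer cannot ``close up'' in the form you describe.
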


\begin{proof}
To construct the map $f$,
we partition $(S^3,K)$ and $(S^3,\tilde{K})$
into $B^3$'s and $(S^2 \times I)$'s as below,
where $I$ denotes an interval, and
we call a piece of the partition of $(S^3,\tilde{K})$
including $\smash{b_\pm^{\pm1}}$ (resp.\ $2c_i$ half-twisted strings)
a $b$--domain (resp.\ $c$--domain).
\begin{align*}
& \begin{picture}(100,40)
\put(0,10){\pc{s11}{0.4}}
\put(14,6){\pc{s14a}{0.4}}
\put(0,-18){\fns $B^3$}
\put(37,12){\small$b$}
\put(27,-18){\fns $S^2 \!\times\! I$}
\put(67,-18){\fns $B^3$}
\end{picture}
\\
&
\begin{picture}(330,70)
\put(0,20){\pc{s10}{0.4}}
\put(14,16){\pc{s13a}{0.4}}
\put(0,-8){\fns $B^3$}
\put(35,23){\small$b_\pm$}
\put(27,-8){\fns $S^2 \!\times\! I$}
\put(70,23){\small $2c_1$}
\put(63,-8){\fns $S^2 \!\times\! I$}
\put(105,23){\small$b_\pm^{-1}$}
\put(100,-8){\fns $S^2 \!\times\! I$}
\put(142,23){\small $2c_2$}
\put(136,-8){\fns $S^2 \!\times\! I$}
\put(180,23){\small$b_\pm$}
\put(173,-8){\fns $S^2 \!\times\! I$}
\put(215,23){\small $2c_3$}
\put(209,-8){\fns $S^2 \!\times\! I$}
\put(294,23){\small$\smash{b_\pm^{\pm1}}$}
\put(288,-8){\fns $S^2 \!\times\! I$}
\put(327,-8){\fns $B^3$}
\end{picture}
\end{align*}

We successively construct the map $f$, first on a $b$--domain,
secondly on a $c$--domain, and thirdly on $B^3$'s,
so that the required map is obtained
by gluing them together.

First, we construct $f$ on each $b$--domain
by mapping $(S^2 \!\times\! I,\, \smash{b^{\pm1}_\pm})$
to $(S^2 \!\times\! I,\, b)$
according to the definition of $\smash{b^{\pm1}_\pm}$.
To be precise, after the natural identification
of the $b$--domain and the middle piece of $(S^3,K)$
with $S^2\times I$,
the homeomorphism is given by the following self-homeomorphism on
$S^2\times I$.
\begin{enumerate}
\item
If the associated symbol is $b_+^{+1}$, the homeomorphism is $\id\times 
\id$.
\item
If the associated symbol is $b_-^{+1}$, the homeomorphism is $R_1\times 
\id$,
where $R_1$ is the homeomorphism of $S^2$ induced by
(the restriction to a level plane of)
the
reflection of $\RR^3$ in the vertical plane
which intersects this paper orthogonally
along the central horizontal line.
\item
If the associated symbol is $b_+^{-1}$, the homeomorphism is
$\id\times R_2$,
where
$R_2$ is the homeomorphism of $[-1,1]$ defined by $R_2(x)=-x$.
\item
If the associated symbol is $b_-^{-1}$, the homeomorphism is 
$R_1\times R_2$.
\end{enumerate}

Secondly, we construct the restriction of $f$ to each $c$--domain.
To this end, note that the two $b$--domains adjacent to
a $c$--domain are related either by a $\pi$--rotation
(about the vertical axis in the center of the $c$--domain)
or by a mirror reflection
(along the central level $2$--sphere
in the $c$--domain).
This follows from the following facts.
\begin{enumerate}
\item
The upper suffixes of the symbols
associated with the $b$--regions are $+1$ and $-1$
alternatively.
\item
$b_{\epsilon}^{+1}$ and $b_{\epsilon}^{-1}$
are related by a mirror reflection for each sign $\epsilon$.
\item
$b_{\epsilon}^{+1}$ and $b_{-\epsilon}^{-1}$
are related by a $\pi$--rotation for each sign $\epsilon$.
\end{enumerate}
The restriction of $f$ to a $c$--domain
is constructed as follows.
If the two relevant $b$--domains are related by a $\pi$--rotation,
then $f$ maps the $c$--domain to the left or right domain of $(S^3,K)$
as illustrated in \fullref{fig.1}.
If the two relevant $b$--domains are related by a mirror reflection,
then $f$ maps the $c$--domain to the left or right domain of $(S^3,K)$
as illustrated in \fullref{fig.2}.
In either case, the map can be made consistent with
the maps from the $b$--domains constructed in the first step.
Moreover, it is a branched fold map
and \lq\lq respects the bridge structures''.
In fact,
in the first case, it has a single branch line in the central
level $2$--sphere, whereas
in the latter case,
it has two branch lines lying in level $2$--spheres
and a single fold surface, which is actually the central level $2$--sphere.

Thirdly, the restriction of $f$ either to the
first left or to the first right domains of $(S^3,\tilde{K})$
is defined to be the natural homeomorphism to
the left or the right domain of $(S^3,K)$
which extends the map already defined on its boundary.

By gluing the maps defined on the pieces of $(S^3,\tilde{K})$,
we obtain the  desired branched fold map
$f\co (S^3,\tilde{K}) \to (S^3,K)$
which respect the bridge structures.
The induced homomorphism $f_\ast : G(\tilde{K}) \to G(K)$
maps the upper meridian pair of $G(\tilde{K})$
to that of $G(K)$ and hence
it is surjective.
\end{proof}

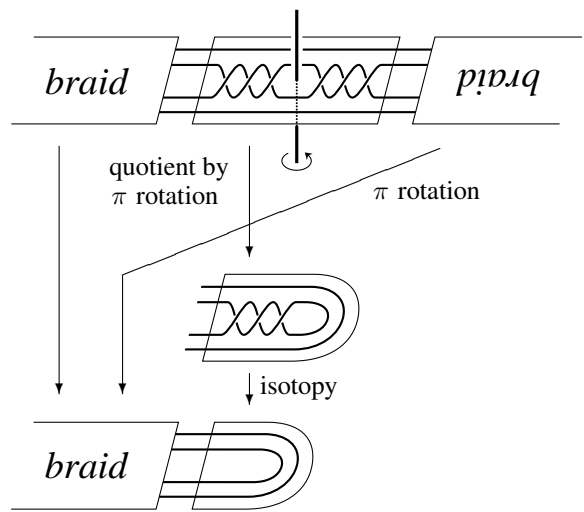
\begin{figure}[ht!]
\begin{center}
\begin{picture}(220,190)
\put(0,150){\pc{s1}{0.4}}
\put(96,134){\vector(0,-1){42}}
\put(43,112){\shortstack[l]{\fns quotient by \\
                             \fns $\pi$ rotation}}
\put(56,64){\pc{s2}{0.4}}
\put(96,48){\vector(0,-1){12}}
\put(100,40){\fns isotopy}
\put(142,114){\fns $\pi$ rotation}
\put(48,85){\line(5,2){120}}
\put(48,85){\vector(0,-1){45}}
\put(24,134){\vector(0,-1){94}}
\put(0,8){\pc{s3}{0.4}}
\end{picture}
\end{center}
\caption{\label{fig.1}
Construction of the map $f$ on a $c$--domain,
when the two adjacent $b$--domains are related
by a $\pi$--rotation}
\end{figure}

\begin{figure}[ht!]
\begin{center}
\begin{picture}(300,216)
\put(0,172){\pc{s4}{0.4}}
\put(96,160){\vector(0,-1){70}}
\put(43,120){\shortstack[l]{\fns quotient by \\
                             \fns $\pi$ rotation}}
\put(204,160){\vector(0,-1){70}}
\put(208,104){\shortstack[l]{\fns quotient by \\
                              \fns $\pi$ rotation}}
\put(56,64){\pc{s2}{0.4}}
\put(96,48){\vector(0,-1){12}}
\put(100,40){\fns isotopy}
\put(168,64){\pc{s5}{0.4}}
\put(176,48){\vector(-2,-1){40}}
\put(156,32){\fns mirror image}
\put(117,132){\fns mirror image}
\put(48,88){\line(3,1){200}}
\put(48,88){\vector(0,-1){48}}
\put(24,160){\vector(0,-1){120}}
\put(0,8){\pc{s3}{0.4}}
\end{picture}
\end{center}
\caption{\label{fig.2}
Construction of the map $f$ on a $c$--domain,
when two adjacent $b$--domains are related by a mirror reflection}
\end{figure}
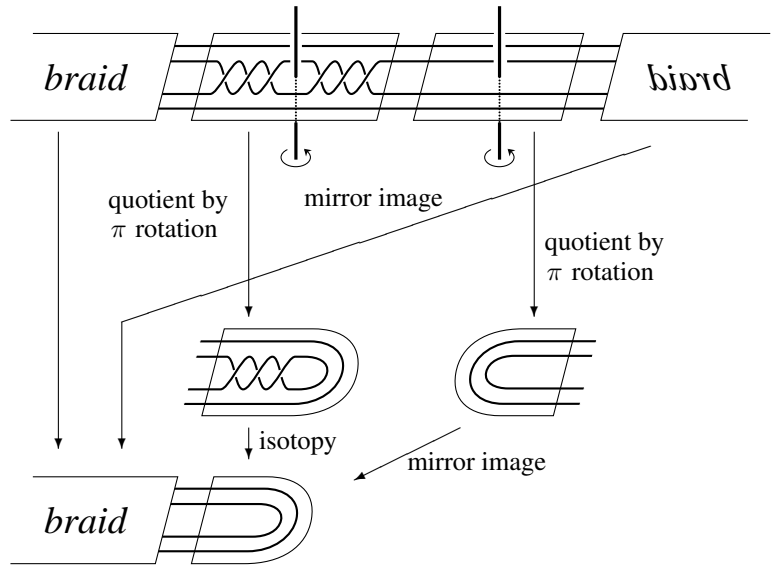

At the end of this section,
we present further properties of the map $f$
we have constructed.

\begin{Proposition}
\label{rem.f1}
The map $f\co (S^3,K(\tilde{r}))\to (S^3,K(r))$
of \fullref{thm.K_tildeK}
satisfies the following properties.
\begin{enumerate}
\item
$f$ sends the upper meridian pair of
$K(\tilde r)$ to that of $K(r)$.
\item
The degree of $f$ is equal to
$d:=\smash{\sum_j} \delta_j\epsilon_j$,
where $\epsilon_j$ and $\delta_j$ 
are the signs such that
the $j$--th $b$--domain of $\tilde K$
corresponds to $\smash{b_{\epsilon_j}^{\smash{\delta_j}}}$.

\item
The image of the longitude(s) of $K(\tilde r)$
by $f_*\co G(K(\tilde r))\to G(K(r))$ is as follows.
\begin{enumerate}
\item
If both $K(r)$ and $K(\tilde r)$ are knots,
then $f_*(\tilde\lambda)=\lambda^d$.
\item
If $K(r)$ is a knot and $K(\tilde r)$
is a $2$--component link
$\tilde K_1\cup
\tilde K_2$.
Then $f_*(\tilde\lambda_j)=
\lambda^{d/2}\mu^{\lk(\tilde K_1,\tilde K_2)}$
for each $j\in\{1,2\}$.
\item
If $K(r)$ is a $2$--component link $K_1\cup K_2$,
then $K(\tilde r)$ is also a $2$--component link
and
$f_*(\tilde\lambda_j)=
\lambda_j^{d}$
for $j\in\{1,2\}$.
\end{enumerate}
Here
$\lambda$ (resp.\ $\lambda_j$,
$\tilde \lambda$, $\tilde \lambda_j$)
denotes the longitude of the knot $K$
(resp.\ the $j$--th component of the $2$--component link $K$,
the knot $\smash{\tilde K}$,
the $j$--th component of the $2$--component link
$\tilde K_j$).
The symbol $\mu$ represents the meridian of
$K(r)$.
\item
If $\epsilon_j=+$ for every $j$,
then $f\co S^3\to S^3$ can be made to be
an $n$--fold branched covering branched over
a trivial link of $n-1$ components
which is disjoint from $K(r)$.
If $n=2$, then it is a cyclic covering.
If $n\ge 3$, then it is an irregular dihedral covering.
\end{enumerate}
\end{Proposition}

\begin{proof}
It is obvious that
the map $f$ from $(S^3,\tilde K)$
to $(S^3,K)$
constructed in the above satisfies
the conditions (1), (2) and (4).
(In order for $f$ to satisfy (4),
one may need to modify the map $f$
so that the image of the branch lines lie on
different level $2$--spheres.)
Thus we prove that $f$ satisfies the condition (3).

Suppose both $K$ and $\tilde K$ are knots.
Then the degree of the restriction of $f$ to $K(r)$
is equal to the degree $d$ of $f\co S^3\to S^3$,
and therefore
we see $f_*(\tilde \lambda)=\lambda^d\mu^c$
for some $c\in\ZZ$.
However, since $[\tilde\lambda]=0$
in $H_1(S^3-\tilde K)$,
we have  $f_*([\tilde\lambda])=0$ in $H_1(S^3-K)$,
which is the infinite cyclic group generated by $[\mu]$.
Hence $c=0$ and therefore
$f_*(\tilde \lambda)=\lambda^d$.

Suppose $K$ is a knot and $\tilde K$ is
$2$--component link $\tilde K_1\cup\tilde K_2$.
Then the degree of the restriction of $f$ to
each of the components of $\tilde K$
is equal to $d/2$,
and therefore
we see $f_*(\tilde \lambda_1)=\lambda^{d/2}\mu^c$
for some $c\in\ZZ$.
Then $\smash{[\lambda_1]=0[\mu_1]+\lk(\tilde K_1,\tilde
K_2)[\mu_2]}$ in $H_1(S^3-\tilde K)$.
Since $f_*[\mu_1]=f_*[\mu_2]=[\mu]$ in
$H_1(S^3-K)$,
we have
$f_*([\tilde \lambda_1])
=\lk(\tilde K_1,\tilde K_2)[\mu]$.
Hence we see $\smash{c=\lk(\tilde K_1,\tilde K_2)}$
and therefore
$$\smash{f_*(\tilde \lambda_1)=\lambda^{d/2}
\mu^{\smash{\lk(\tilde K_1,\tilde K_2)}}}.$$  Similarly, $\smash{f_*(\tilde \lambda_2)=\lambda^{d/2}
\mu^{\smash{\lk(\tilde K_1,\tilde K_2)}}}$.
Finally suppose both $K$ and $\tilde K$
are $2$--component links.
Then the degree of the restriction of $f$ to
each of the components of $\tilde K$
is equal to $d$,
and therefore
we see $\smash{f_*(\tilde \lambda_1)=\lambda_1^{d}\mu_1^c}$
for some $c\in\ZZ$.
Note that $f$ induces a continuous map
$\smash{S^3-\tilde K_1}\to \smash{S^3-K_1}$
and therefore
$\smash{[f_*(\tilde \lambda_1)]}=0$ in
$H_1(S^3-K_1)$.
Thus we have $c=0$ and therefore
$\smash{f_*(\tilde \lambda_1)=\lambda_1^{d}}$.
Similarly, we have
$\smash{f_*(\tilde \lambda_2)=\lambda_2^{d}}$.
\end{proof}

\begin{Remark}
\label{rem.f2}
(1)\qua Under the notation in \fullref{Prop:continued
fraction}, we can see that the degree of $f$ is equal to
$\sum_{j=1}^m \epsilon_j$.

(2)\qua Let $q'$ be the integer such that
$0<q'<p$ and $qq'\equiv 1 \pmod p$, and
set $r'=q'/p \in (0,1)$.
Then there is an orientation-preserving
self-homeomorphism of $S^3$
which sends $K(r)$ to $K(r')$
and interchanges the upper and lower bridges.
Thus \fullref{Thm:epimorphism2}
is valid even if we replace $\cfr$ with $\cfr^{-1}$.
The induced epimorphism
$f_*\co G(K(\tilde r))\to G(K(r))$ for this case
send the upper meridian pair of
$K(\tilde r)$ to a lower meridian pair.
\end{Remark}

\section{Application to character varieties}
\label{sec.icv}

In this section,
we give applications of \fullref{Thm:epimorphism1}
to character varieties of some $2$--bridge knots.

Roughly speaking,
a character variety is
(a component of a closure of) the space of conjugacy classes of
irreducible representations of the knot group $G(K)$ to
$SL(2,\CC)$. An explicit definition of the character variety is
outlined as follows; for details see Culler--Shalen \cite{Culler-Shalen} and Shalen \cite{Shalen}.
Let $R(K)$ be the space of all representations of $G(K)$ to $SL(2,\CC)$,
and let $X(K)$ be the image of the map $R(K) \to \CC^N$
taking $\rho$ to $\big( \tr(\rho(g_1)),\cdots, \tr(\rho(g_N)) \big)$
for ``sufficiently many'' $g_1,\cdots,g_N \in G(K)$.
Then $X(K)$ is shown to be an algebraic set.
We define $X^{\rm irr}(K)$
to be the Zariski closure of the image in $X(K)$ of
the space of the irreducible representations of $G(K)$ to $SL(2,\CC)$.
By a \textit{character variety} of $K$, we mean
an irreducible component of $X^{\rm irr}(K)$.
If $X^{\rm irr}(K)$ is irreducible,
$X^{\rm irr}(K)$ itself is a variety.
In fact this holds for many knots,
though in general $X^{\rm irr}(K)$ is an algebraic set
consisting of some irreducible components.

The second author \cite{Riley1,Riley_nab} concretely identified
$X^{\rm irr}(K(r))$ of any $2$--bridge knot $K(r)$
with an algebraic set in $\CC^2$
determined by a single $2$--variable polynomial,
by the map
$\rho \mapsto \big( \tr(\rho(\mu_1)), \tr(\rho(\mu_1 \mu_2^{-1})) \big)
\in \CC^2$
for the (upper or lower) meridian pair $\{ \mu_1,\mu_2 \}$
of the $2$--bridge knot group.
Further, the first author \cite{Ohtsuki_ip} classified
the ideal points of $X^{\rm irr}(K(r))$.

If $r=1/p$ for odd $p \ge 3$,
the $2$--bridge knot $K(1/p)$ is the $(2,p)$ torus knot,
and $X^{\rm irr}(K(1/p))$ consists of
$(p-1)/2$ components of affine curves \cite{Riley1},
whose generic representations are faithful
(up to the center of the torus knot group).
In particular, $X^{\rm irr}(K(1/p))$ is reducible for $p \ge 5$.
Otherwise (ie, if $K(r)$ is not a torus knot),
$K(r)$ is a hyperbolic knot,
and $X^{\rm irr}(K)$ has an irreducible component
including the faithful (discrete) representation given by
the holonomy of the complete hyperbolic structure of the knot 
complement.

We have the following application of \fullref{Thm:epimorphism1}
to the reducibility of  $X^{\rm irr}(K)$.

\begin{Corollary}
\label{Cor:character variety1}
Let $K(r)$ and $K(\tilde{r})$ be distinct nontrivial $2$--bridge knots
such that $\tilde r$ belongs to the $\RGPP{r}$--orbit of $r$ or $\infty$.
Then $X^{\rm irr}(K(\tilde r))$ is reducible.
\end{Corollary}

\begin{proof}
By \fullref{Thm:epimorphism1},
there is an epimorphism $\varphi \co G(K(\tilde r))\to G(K(r))$,
and it induces an inclusion
$\varphi^\ast \co X^{\rm irr}(K(r))\to X^{\rm irr}(K(\tilde r))$.
As mentioned above,
any $2$--bridge knot group has faithful representations
(modulo the center when it is a torus knot group),
and hence, $X^{\rm irr}(K(r))$ is nonempty.
Hence the image $\varphi^\ast \big( X^{\rm irr}(K(r)) \big)$
is a nonempty union of
the irreducible components of $X^{\rm irr}(K(\tilde{r}))$,
consisting of nonfaithful representations
$$
G(K(\tilde{r})) \xrightarrow{\rm nonfaithful}
G(K(r)) \lra SL(2,\CC) .
$$
On the other hand,
$X^{\rm irr}(K(\tilde{r}))$ has an irreducible component
including a faithful representation
$$
G(K(\tilde{r})) \xrightarrow{\rm faithful} SL(2,\CC) .
$$
(modulo the center when it is a torus knot group).
This representation is not contained in
$\varphi^\ast \big( X^{\rm irr}(K(r))\big)$,
even when $K(\tilde{r})$ is a torus knot.
Hence, $X^{\rm irr}(K(\tilde{r}))$ is reducible,
including at least 2 irreducible components.
\end{proof}

\begin{Remark}\rm
For a $2$--component $2$--bridge link $K(r)$,
the second author \cite{Riley_nab} concretely identifies
$X^{\rm irr}(K(r))$
with a $2$--dimensional algebraic set in $\CC^3$
determined by a single $3$--variable polynomial,
unless $r \in \frac12 \ZZ \cup \{ \infty \}$
(where $X^{\rm irr}(K(r))$ is empty).
Moreover, it can be shown by a similar proof that
\fullref{Cor:character variety1}
also holds for every $2$--bridge link,
unless $r \in \frac12 \ZZ \cup \{ \infty \}$.
\end{Remark}

A similar argument as the above proof is used
by Soma \cite{Soma1}
to study the epimorphisms among
the fundamental groups of hyperbolic manifolds
(see \fullref{Sec:pi_1-dominating maps}).
The proof of following corollary may be regarded as a kind of the
inverse to that of his main result in \cite{Soma1}.

\begin{Corollary}
\label{Cor:character variety2}
For any positive integer $n$,
there is a hyperbolic $2$--bridge knot $K(r)$,
such that $X^{\rm irr}(K(r))$ has at least $n$ irreducible components.
\end{Corollary}

\begin{proof}
By \fullref{Thm:epimorphism1}, we can construct an
infinite tower
\[
\cdots \to G(K(r_n)) \to G(K(r_{n-1})) \to
\cdots \to G(K(r_2)) \to G(K(r_1))
\]
of epimorphisms among $2$--bridge knot groups
such that none of the epimorphisms is an isomorphism.
Then by the argument in the proof
of \fullref{Cor:character variety1},
$X^{\rm irr}(K(r_n))$ has an irreducible component
including a representation
$$
G(K(r_n)) \lra
G(K(r_i)) \xrightarrow{\rm faithful} SL(2,\CC) ,
$$
for each $i = 1,2,\cdots,n$.
Since these components are distinct,
$X^{\rm irr}(K(r_n))$ has at least $n$ irreducible components.
\end{proof}

\section[Application to pi1-surjective maps between 3-manifolds]{Application to
$\pi_1$--surjective maps between $3$--manifolds}
\label{Sec:pi_1-dominating maps}

Let $M$ and $N$ be connected closed orientable
$3$--manifolds.
Then a continuous map $f\co M\to N$
is said to be \textit{$\pi_1$--surjective}
if $f_*\co \pi_1(M)\to\pi_1(N)$
is surjective.
If the degree $d$ of $f$ is nonzero,
then the index
$[\pi_1(N): f_*(\pi_1(M))]$
is a divisor of $d$.
In particular, if the degree of $f$ is $1$, then $f$
is $\pi_1$--surjective.
Motivated by this fact,
Reid--Wang--Zhou
\cite{Reid-Wang-Zhou}
proposed various questions,
in relation with
Simon's problem \cite[Problem 1.12]{Kirby}
and
Rong's problem \cite[Problem 3.100]{Kirby}.
In this section, we study the following questions
proposed in \cite{Reid-Wang-Zhou}.
\begin{enumerate}
\item
(Question 1.5)\qua
Let $M$ and $N$ be closed aspherical $3$--manifolds
such that the rank of $\pi_1(M)$ equals the rank of $\pi_1(N)$.
Assume $\phi\co \pi_1(M)\to\pi_1(N)$ is surjective or
its image is a subgroup of finite index.
Does $\phi$ determine a map $f\co M\to N$ of nonzero degree?
\item
(Question 3.1(D))\qua
Are there only finitely many
closed orientable $3$--manifolds $M_i$ with the same
first Betti number, or the same $\pi_1$--rank,
as that of
a closed orientable $3$--manifold $M$,
for which there is an epimorphism
$\pi_1(M)\to \pi_1(M_i)$?
\end{enumerate}
Example 1.4 in \cite{Reid-Wang-Zhou}
presents a closed hyperbolic
$3$--manifold $M$ with $\pi_1$--rank $>2$
(and $b_1(M)>2$),
for which there are infinitely many mutually nonhomeomorphic
hyperbolic $3$--manifolds $M_i$
with $\pi_1$--rank $2$ (and hence $b_1(M)\le 2$),
such that there is a
$\pi_1$--surjective degree $0$ map $M\to M_i$.
This shows that the conditions on
the $\pi_1$--ranks (and Betti numbers) in the above questions are
indispensable.
Moreover, they give the following partial positive answers
to the questions for Seifert fibered spaces and non-Haken manifolds.
\begin{enumerate}
\item
Any $\pi_1$--surjective map between
closed orientable Seifert fibered spaces
with the same $\pi_1$--rank and with orientable base orbifolds
is of nonzero degree \cite[Theorem 2.1]{Reid-Wang-Zhou}.
\item
For any non-Haken closed orientable hyperbolic manifold $M$,
there are only finitely many closed orientable
hyperbolic $3$--manifolds $M_i$ for which there
is an epimorphism $\pi_1(M)\to\pi_1(M_i)$
\cite[Theorem 3.6]{Reid-Wang-Zhou}.
\end{enumerate}
Gonzal\'ez-Ac\~una and Ram\'inez
have constructed
a counter example to the questions
where
the source manifold is hyperbolic and
the target manifolds are Seifert fibered spaces
\cite[Example 26]{Gonzalez-Raminez2}.
They asked if there is a counter example
where
the source and target manifolds
are hyperbolic manifolds.
The following corollary to \fullref{Thm:epimorphism2}
gives such an example.

\begin{Corollary}
\label{Cor:Dominante infinite mfds}
There is a closed orientable hyperbolic Haken
$3$--manifold
$M$  and infinitely many mutually nonhomeomorphic,
closed, orientable, hyperbolic $3$--manifolds $M_i$
which satisfy the following conditions.
\begin{enumerate}
\item
There is a $\pi_1$--surjective degree $0$ map
$f_i\co M\to M_i$.
\item
The ranks of the fundamental groups of $M$ and $M_i$
are all equal to $2$.
\end{enumerate}
\end{Corollary}

\begin{proof}
Pick a proper map
$f\co (S^3,K(\tilde r))\to
(S^3,K(r))$ between $2$--bridge links
satisfying the conditions of
\fullref{Thm:epimorphism2},
such that
the degree of $f$ is $0$ and
$K(\tilde r)$ is
a $2$--component link
$\tilde K_1\cup \tilde K_2$
and $K(r)$ is a knot.
Set $q=\lk(\tilde K_1,\tilde K_2)$.
Then, by \fullref{rem.f1}, $f$ maps the essential simple loop
$\tilde \lambda_j-q\tilde\mu_j$ on
$\partial N(\tilde K_j)$
to a nullhomotopic loop on $\partial N(K)$.
Let $M_0$ be the manifold obtained
by surgery along the link
$\tilde K_1\cup \tilde K_2$,
where $2$--handle is attached along the curve
$\tilde \lambda_j-q\tilde\mu_j$
on $\partial N(\tilde K_j)$ for each $j=1,2$.
Then for every manifold $M(s)$ ($s\in\smash{\QQQ}$),
obtained by $s$--surgery on $K(r)$,
the map $f\co S^3-K(\tilde r)\to S^3-K(r)$
extends to $\pi_1$--surjective map
$M_0\to M(s)$ of degree $0$.
On the other hand, we may choose
$r$ and $\tilde r$ so that
$M_0$ is hyperbolic
and that $M(s)$ are hyperbolic with finite exceptions.
For example, if $r=[2,2]$ and
$\tilde r=[2,2,2,-2,-2]$,
then $K(r)$ is the (hyperbolic) figure-eight knot
and therefore $M(s)$ is hyperbolic with finite exceptions.
Moreover we can check by SnapPea \cite{Weeks}
that $M_0$ is hyperbolic.
Since $M_0$ and $M(s)$ are closed hyperbolic manifolds
whose fundamental groups are generated by two elements,
their $\pi_1$--ranks must be equal to $2$.
Moreover, $M$ is Haken, because
the first Betti number of $M$ is $2$ or $1$
according as
$\lk(\tilde K_1,\tilde K_2)=0$ or not.
This completes the proof of
\fullref{Cor:Dominante infinite mfds}.
\end{proof}

\begin{Remark}\rm
In the above corollary,
the first Betti number of $M$ is $\ge 1$,
whereas
the first Betti numbers of $M_i$
are all equal to $0$.
This is the same for \cite[Example
26]{Gonzalez-Raminez2}.
We do not know if there is a counter
example to the second question such that
the first Betti numbers of $M$ and $M_i$
are all equal.
\end{Remark}

\fullref{Cor:Dominante infinite mfds}
does not have a counterpart
where the condition that
the maps are of degree $0$
are replaced with the condition that
the maps are nonzero degree.
In fact, Soma \cite{Soma1} proves that
for every closed, connected, orientable $3$--manifold $M$,
the number of mutually nonhomeomorphic,
orientable, hyperbolic $3$--manifolds dominated by $M$
is finite.
(Here a $3$--manifold $N$ is said to be \textit{dominated} by $M$
if there is exists a nonzero degree map
$f\co M\to N$.)
In Soma's theorem, the condition that the manifolds are orientable
is essential.
In fact, as is noted in \cite[Introduction]{Soma1},
some arguments in
Boileau--Wang \cite[Section 3]{Boileau-Wang}
implies that there is a nonorientable
manifold $M$ which dominates
infinitely many mutually nonhomeomorphic $3$--manifolds.

We present yet another application
of \fullref{Thm:epimorphism2}
to $\pi_1$--surjective maps.
By studying the character varieties,
Soma \cite{Soma2} observed that
there is no infinite descending tower
of $\pi_1$--surjective maps between
orientable hyperbolic $3$--manifolds,
namely,
any infinite sequence of
$\pi_1$--surjective maps
\[
M_0\to M_1\to \cdots \to
M_i \to M_{i+1}\to \cdots
\]
between orientable
hyperbolic $3$--manifolds $M_i$ (possibly of infinite
volume) contains an isomorphism.
On the other hand,
Reid-Wang-Zhou constructed an infinite ascending tower
of $\pi_1$--surjective maps of degree $>1$
between closed orientable hyperbolic $3$--manifolds
with the same $\pi_1$--rank
\cite[Example 3.2]{Reid-Wang-Zhou}.
The following corollary to \fullref{Thm:epimorphism2}
refines their example,
by constructing such a tower for degree $1$ maps.

\begin{Corollary}
\label{Thm:infinite tower}
There is an infinite ascending tower of
$\pi_1$--surjective maps of degree $1$
\[ \cdots \to M_i \to M_{i-1}\to \cdots \to M_1\to
M_0\]
between
closed (resp.\ cusped) orientable hyperbolic
$3$--manifolds
which satisfies the following conditions.
\begin{enumerate}
\item
The ranks of $\pi_1(M_i)$ are all equal to $2$.
\item
$H_1(M_i)\cong \ZZ$ for every $i$, and
each map induces an isomorphism between the homology groups.
\end{enumerate}
\end{Corollary}

\begin{proof}
By \fullref{Thm:epimorphism2},
we can construct an infinite ascending tower
\[
\cdots \to (S^3,K_i) \to (S^3,K_{i-1}) \to
\cdots \to (S^3, K_1) \to (S^3,K_0)
\]
of degree $1$ proper maps among hyperbolic $2$--bridge knots,
such that each map induces an epimorphism
among the knot groups which is not an isomorphism.
By taking the knot complements and induced maps,
we obtain a desired tower of cusped hyperbolic manifolds.
Now, let $M_i$ be the result of $0$--surgery on $K_i$.
Since each map sends the meridian-longitude pair
of $K_i$ to that of $K_{i-1}$,
the above tower gives rise to a tower
of $\pi_1$--surjective maps of degree $1$
\[
\cdots \to M_i \to M_{i-1} \to
\cdots \to M_1 \to M_0.
\]
By the classification
of exceptional surgeries on hyperbolic
$2$--bridge knots
due to
Brittenham--Wu
\cite{Brittenham-Wu}
and by the orbifold theorem
\cite{Boileau-Porti,Cooper-Hodgson-Kerckhoff}, we can choose
$K_i$ so that every $M_i$ is hyperbolic.
Thus the above tower satisfies the conditions
on the $\pi_1$--rank and the homology.
Thus our remaining task is to show that
none of the maps is a homotopy equivalence.
To this end, we choose $K_i$ so that
the genus of $K_i$ is monotone increasing.
This can be achieved by starting from
the continued fraction consisting of only nonzero even integers,
ie, the components of the sequence $\cfr$ in
\fullref{Prop:continued fraction} are all
nonzero even integers.
(Though it seems that any tower satisfies this condition,
it is not totally obvious that this is actually the case.)
Then the degree of the Alexander polynomial
of $K_i$ is monotone increasing.
Since the Alexander polynomial is an invariant
of (the homotopy type of)
the manifold obtained by $0$--surgery,
this implies that $M_i$ are mutually non--homotopy equivalent.
This completes the proof of
\fullref{Thm:infinite tower}.
\end{proof}

We note that
if we drop the condition on $\pi_1$--rank,
then the existence of such
an infinite ascending tower is obvious
from Kawauchi's imitation theory \cite{Kawauchi}.

\section{Some questions}
\label{sec:questions}

In this final section,
we discuss two questions
related to \fullref{Thm:epimorphism1} and \fullref{Thm:epimorphism2}.

\begin{Question}\label{Question:nullhomotopic}
(1)\qua Does the converse to \fullref{Thm:epimorphism1}
holds?
Or more generally,
given a $2$--bridge link $K(r)$,
which $2$--bridge link group has
an epimorphism
onto the link group of $K(r)$?

(2)\qua Does the converse to
\fullref{Cor:nullhomotopic} hold?
Namely, is it true that
$\alpha_s$ is nullhomotopic in $S^3-K(r)$
if and only if $s$ belongs to the $\RGPP{r}$--orbit of $\infty$ or $r$?
\end{Question}

F\,Gonzal\'ez-Ac\~una and A\,Ram\'inez
gave a nice partial answer to
the first question.
They proved that
if $r=1/p$ for some odd integer $p$,
namely $K(r)$ is a $2$--bridge torus knot,
then the knot group of a $2$--bridge knot $K(\tilde r)$
($\tilde r=\tilde q/\tilde p$ for some
odd integer $\tilde p$)
has an epimorphism onto the knot group
of the $2$--bridge torus knot $K(1/p)$
if and only if $\tilde r$ has a continued fraction
expansion of the form in
\fullref{Prop:continued fraction}, namely
$\tilde r$ is contained in the $\RGPP{r}$--orbit of
$r$ or $\infty$.
(See Gonz{\'a}lez-Acu{\~n}a--Ram{\'{\i}}rez \cite[Theorem 1.2]{Gonzalez-Raminez1}
and \cite[Theorem 16]{Gonzalez-Raminez2}).
By the proof of \fullref{Thm:epimorphism1},
this also implies a partial positive answer to
the second question when
$r=1/p$ for some odd integer $p$.

In \cite{Sakuma},
the last author studied the second question,
in relation with a possible variation
of McShane's identity \cite{McShane}
for $2$--bridge links,
by using Markoff maps,
or equivalently, trace functions
for \lq\lq type-preserving'' $SL(2,\CC)$--representations
of the fundamental group
of the once-punctured torus.
See Bowditch \cite{Bowditch2} for the precise definition
and detailed study of Markoff maps,
and our joint paper \cite[Section 5.3]{ASWY2} for
the relation of Markoff maps and the $2$--bridge links.
He announced
an affirmative answer to the second question
for the $2$--bridge torus link
$K(1/p)$ for every integer $p$,
the figure-eight knot
$K(2/5)$ and the $5_1$--knot $K(3/7)$.
In his master thesis \cite{Eguchi}
supervised by the third author,
Tomokazu Eguchi obtained,
by numerical calculation of Markoff maps,
an affirmative answer to the question
for the twist knots $K(n/(2n+1))$
for $2\le n\le 10$.

At the beginning of the introduction,
we mentioned the problem:
for a given knot $K$,
characterize a knot $\tilde{K}$ which admits
an epimorphism $G(\tilde{K}) \to G(K)$.
Motivated by \fullref{Thm:epimorphism2},
we consider the following procedure
to construct knots $\tilde{K}$ from a given knot $K$.
\begin{itemize}
\item[(a)]
Choose a branched fold map $f\co  M \to S^3$
for a closed $3$--manifold $M$ such that
the image of each component of the fold surface is transverse to $K$
and the image of each component of branch curve is a knot disjoint from 
$K$.
Then we obtain $\tilde{K}$ as the preimage $f^{-1}(K)$.
\end{itemize}
Such a $\tilde{K}$ often admits an epimorphism $G(\tilde{K}) \to G(K)$.
Further, when we have an epimorphism $\phi\co G(\tilde{K}) \to G(K)$,
we consider the following procedures to modify $(M,\tilde{K})$.
\begin{itemize}
\item[(b)]
Replace $(M,\tilde{K})$ with the pair obtained from $(M,\tilde{K})$
by surgery along a simple closed curve in the kernel of $\phi$.
\item[(c)]
Replace $\tilde{K}$ by the following move.
$$
\begin{picture}(180,50)
\put(0,15){\pc{s19}{0.4}}
\put(4,18){\fns $g_1$}
\put(47,15){\fns $g_2$}
\put(70,20){\vector(1,0){80}}
\put(75,5){\fns if $\phi(g_1)=\phi(g_2)$}
\put(155,15){\pc{s20}{0.4}}
\end{picture}
$$
\end{itemize}
We can construct many examples of $\tilde{K}$ from $K$
by the construction (a),
further modifying $\tilde{K}$
by applying (b) and (c) repeatedly.
(Even if an intermediate ambient $3$--manifold is not $S^3$,
we may obtain a knot in $S^3$ by modifying it
into $S^3$ by using  (b).)
The following question asks whether
the constructions (a), (b) and (c)
give a topological characterization of a knot $\tilde{K}$
having an epimorphism $G(\tilde{K}) \to G(K)$
for a given knot~$K$.

\begin{Question}
\label{q.const_tK}
If there is an epimorphism $G(\tilde{K}) \to G(K)$
between knot groups preserving the peripheral structure,
can we obtain $\tilde{K}$ from $K$
by repeatedly applying the above constructions
{\rm (a), (b)} and {(c)}?
\end{Question}

The first author has given a positive answer to
this question for all such pairs of prime knots $(\tilde{K},K)$
with up to 10 crossings,
by checking the list in Kitano--Suzuki \cite{Kitano-Suzuki}
(see \fullref{tbl.const_tK}).
The answer to the question is also positive,
if either (i) $\tilde{K}$ is a satellite knot with pattern knot $K$
(cf \cite[Proposition 3.4]{Silver-Whitten}),
or  (ii) $\tilde{K}$ is a satellite knot of $K$
of degree $1$
(ie, $\tilde{K}$ is homologous to $K$
in the tubular neighborhood of $K$.)
In particular,
the answer to \fullref{q.const_tK} is positive,
when $\tilde{K}$ is a connected sum of $K$ and some knot.
We can also obtain a positive answer for the case 
when there are a ribbon concordance $C$ from $\tilde{K}$ to $K$
and an epimorphism $G(\tilde{K}) \to G(K)$
which is compatible with
$G(\tilde{K}) \to \pi_1(S^3 \!\times\! I - C) \leftarrow G(K)$
(cf \cite[Lemma 3.1]{Gordon}).
(In general, a ribbon concordance between knots does not necessarily
induce an epimorphism between their knot groups; see \cite{Miyazaki}.)

Finally, we note that
certain topological interpretations of
some of the epimorphisms in the table of \cite{Kitano-Suzuki}
have been obtained by
Kitano--Suzuki \cite{Kitano-Suzuki2},
from a different view point.

\begin{table}[p!]
    \renewcommand{\arraystretch}{1.125}
\begin{center}
\begin{tabular}{l}
\small $8_5 \ \approx \ 3_1 \# 3_1 \lra 3_1$ \\
\small $8_{10} \ \approx \ 3_1 \# \ol{3_1} \lra 3_1$ \\
\small $8_{15} \ \approx \ 3_1 \# 3_1 \lra 3_1$ \\
\small $8_{18} \ \approx \ 3_1$ \\
\small $8_{19} \ \approx \ \ol{3_1} \# \ol{3_1} \lra \ol{3_1}$ \\
\small $8_{20} \ \approx \ 3_1 \# \ol{3_1} \lra 3_1$ \\
\small $8_{21} \ \approx \ 3_1 \# 3_1 \lra 3_1$ \\
\small $9_1 \, = \, K(\mbox{\fns $[9]$}) \lra 3_1$ \\
\small $9_6 \, = \, K(\mbox{\fns $[6,-2,3]$}) \lra 3_1$ \\
\small $9_{16} \ \approx \ \ol{3_1} \# \ol{3_1} \lra \ol{3_1}$ \\
\small $9_{23} \, = \, K(\mbox{\fns $[-3,2,-3,2,-3]$}) \lra 3_1$ \\
\small $9_{24} \ \approx \ 3_1 \# \ol{3_1} \lra 3_1$ \\
\small $9_{28} \ \approx \ 3_1 \# 3_1 \lra 3_1$ \\
\small $9_{40} \lra 3_1$ \\
\small $10_{5} \, = \, K(\mbox{\fns $[3,-2,6]$}) \lra 3_1$ \\
\small $10_{9} \, = \, K(\mbox{\fns $[3,2,-6]$}) \lra 3_1$ \\
\small $10_{32} \, = \, K(\mbox{\fns $[3,-2,3,-2,-3]$}) \lra 3_1$ \\
\small $10_{40} \, = \, K(\mbox{\fns $[3,-2,-3,2,-3]$}) \lra 3_1$ \\
\small $10_{61} \ \approx\approx \ \ol{3_1} \# \ol{3_1} \lra \ol{3_1}$ 
\\
\small $10_{62}\ \approx\approx \ 3_1 \# \ol{3_1} \lra 3_1$ \\
\small $10_{63} \ \approx\approx \ 3_1 \# 3_1 \lra 3_1$ \\
\small $10_{64} \ \approx\approx \ \ol{3_1} \# \ol{3_1} \lra \ol{3_1}$ 
\\
\small $10_{65}\ \approx\approx \ 3_1 \# \ol{3_1} \lra 3_1$ \\
\small $10_{66} \ \approx\approx \ 3_1 \# 3_1 \lra 3_1$ \\
\small $10_{76} \ \approx \ \ol{3_1} \# \ol{3_1} \lra \ol{3_1}$ \\
\small $10_{77}\ \approx \ 3_1 \# \ol{3_1} \lra 3_1$ \\
\small $10_{78} \ \approx \ 3_1 \# 3_1 \lra 3_1$ \\
\small $10_{82}\ \approx \ 3_1 \# \ol{3_1} \lra 3_1$ \\
\small $10_{84}\ \approx \ 3_1 \# \ol{3_1} \lra 3_1$
\end{tabular}
\qquad\quad
\begin{tabular}{l}
\small $10_{85} \ \approx \ 3_1 \# 3_1 \lra 3_1$ \\
\small $10_{87}\ \approx \ 3_1 \# \ol{3_1} \lra 3_1$ \\
\small $10_{98} \ \approx\approx \ 3_1 \# 3_1 \lra 3_1$ \\
\small $10_{99}\ \approx\approx \ 3_1 \# \ol{3_1} \lra 3_1$ \\
\small $10_{103}\ \sim\sim \ 3_1$ \\
\small $10_{106}\ \approx \ \ol{3_1}$ \\
\small $10_{112}\ \approx\approx\approx \ \ol{3_1}$ \\
\small $10_{114}\ \approx \ \ol{3_1}$ \\
\small $10_{139}\ \approx\approx \ \ol{3_1}$ \\
\small $10_{140}\ \approx\approx\ 3_1 \# \ol{3_1} \lra 3_1$ \\
\small $10_{141} \ \approx\approx \ 3_1 \# 3_1 \lra 3_1$ \\
\small $10_{142}\ \approx\approx\ 3_1 \# \ol{3_1} \lra 3_1$ \\
\small $10_{143}\ \approx\approx\ 3_1 \# \ol{3_1} \lra 3_1$ \\
\small $10_{144} \ \approx\approx \ 3_1 \# 3_1 \lra 3_1$ \\
\small $10_{159}\ \approx\approx \ 3_1$ \\
\small $10_{164}\ \approx \ \ol{3_1}$ \\
\small $8_{18} \lra 4_1$ \\
\small $9_{37} \ \sim \ 4_1 \# 4_1 \# 4_1 \lra 4_1$ \\
\small $9_{40}\ \approx \ 4_1$ \\
\small $10_{58} \ \approx \ 4_1 \# 4_1 \lra 4_1$ \\
\small $10_{59} \ \approx \ 4_1 \# 4_1 \lra 4_1$ \\
\small $10_{60} \ \approx \ 4_1 \# 4_1 \lra 4_1$ \\
\small $10_{122} \lra 4_1$ \\
\small $10_{136} \ \sim \ 8_6^2 \lra 4_1$ \\
\small $10_{137} \ \approx \ 4_1 \# 4_1 \lra 4_1$ \\
\small $10_{138} \ \approx \ 4_1 \# 4_1 \lra 4_1$ \\
\small $10_{74} \ \sim\sim \ \ol{5_2}$ \\
\small $10_{120} \lra 5_2$ \\
\small $10_{122}\ \approx \ \ol{5_2}$
\end{tabular}
\end{center}
\vskip8pt
\caption{\label{tbl.const_tK}
Sketch answer to \fullref{q.const_tK}
for the pairs of prime knots
with up to 10 crossings,
listed by Kitano-Suzuki \protect\cite{Kitano-Suzuki}.
Here, we denote the procedures (a), (b), (c) by
arrow, ``$\approx$'', ``$\sim$'' respectively,
and, say ``$\approx \approx$''
means to apply (b) twice.
The numerical notation for knots and links is the one in Rolfsen \protect\cite{Rolfsen},
and $\ol{K}$ denotes the mirror image of $K$.}
\end{table}

\bibliographystyle{gtart}
\bibliography{link}

\end{document}